\newtheorem{theorem}{Theorem}[section]
\newtheorem{lemma}[theorem]{Lemma}
\newtheorem{proposition}[theorem]{Proposition}
\newtheorem{conjecture}[theorem]{Conjecture}
\theoremstyle{remark}
\title[Sign changes of Kloosterman sums]{Sign changes of Kloosterman sums with moduli having at most six prime factors}
\author[Tianping Zhang]{Tianping Zhang$^{1,2}$}
\address{1. School of Mathematics and Statistics, Shaanxi Normal University, Xi'an, 710119, Shaanxi, P. R. China}
\address{2. Research Center for Number Theory and Its Applications, Northwest University, Xi'an, 710127, Shaanxi, P. R. China}
\email{tpzhang@snnu.edu.cn}
\author[Mingxuan Zhong]{Mingxuan Zhong$^{1,*}$}
\address{Mingxuan Zhong is the corresponding author}
\email{zhong@snnu.edu.cn}
\date{}
\begin{document}
\def \LLN {\prec \hskip-6pt \prec}

\begin{abstract}
We prove that the Kloosterman sum $\text{Kl}(1,q)$ changes sign infinitely many times, as $q\rightarrow +\infty$ with at most six prime factors. As a consequence, our result improved the best known result of Xi(IMRN, 2022). The novelty of our method comes from introducing a new truncated divisor function whose selection depends on the number of prime factors of the variable, through which Kloosterman sum is controlled good enough. Our arguments contain the Selberg sieve method, spectral theory and distribution of Kloosterman sums along with previous nice works by Fouvry, Matom\"{a}ki, Michel, Sivak-Fischler and Xi.
\bigskip

\textbf{Keywords} Kloosterman sum; Sign change; Selberg sieve method; Spectral theory; Equidistribution
\bigskip

\textbf{MSC(2020)} 11L05, 11N36(11N75, 11L20, 26D15)

\end{abstract}

\maketitle

\section{Introduction}

\subsection{Background}

Let $q$ be a fixed positive integer. For arbitrary integers $m$, the classical Kloosterman sums are defined by
$$
\text{Kl}(m;q)=\frac{1}{\sqrt{q}}\sum_{b(q)\atop (b,q)=1}e\left(\frac{mb+\overline{b}}{q}\right),
$$
where $e(x)=e^{2\pi ix}$. Kloosterman sums have originated from Poincar\'{e} \cite{Poincare1911} and Kloosterman \cite{Kloosterman1927}. Scholars are concerned about whether there exists some specific explicit expression or asymptotic formula? Unfortunately, these results are beyond our capabilities for now.

Kloosterman sums are one of the central topics in analytic number theory and related to numerous applications in Diophantine equations and automorphic forms. A well-known estimate from Weil \cite{Weil1948} for the individual Kloosterman sum is
$$
\left|\text{Kl}(m;p)\right|\leq 2(m,p)^{1/2},
$$
in general
$$
\left|\text{Kl}(m;p)\right|\leq (m,q)^{1/2}\tau(q),
$$
and the factor $\tau(q)$ above can be modified to $2^{\omega(q)}$ for $32\nmid q$ by Estermann \cite{Estermann1961} ($\omega(q)$ represents the number of different prime factors of $q$).

The equidistributions of Kloosterman sums are proposed by Katz \cite{Katz1980} and analogized by the Sato-Tate conjecture of elliptic curves, which we state as

\begin{conjecture}\label{Sato-Tate conj}
For each $f\in\mathcal{C}([0,\pi])$ and non-zero integer $a$, we have
$$
\lim_{x\rightarrow +\infty}\frac{1}{\pi(x)}\sum_{p\leq x}f\left(\theta_{p}(a)\right)=\frac{2}{\pi}\int_{0}^{\pi}f(\theta)\sin^2{\theta}d\theta,
$$
where $\theta_{p}(a)\in [0,\pi]$ is so-called Kloosterman sum angle defined as
$$
2\cos{\theta_{p}(a)}:=\text{Kl}(a;p).
$$
\end{conjecture}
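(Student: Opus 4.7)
My plan is to reduce Conjecture~\ref{Sato-Tate conj} to analytic statements about $L$-functions attached to Kloosterman sheaves via Weyl's equidistribution criterion. The Chebyshev polynomials of the second kind $\{U_k(\cos\theta)\}_{k\ge 0}$ form a complete orthonormal system in $L^2\!\left([0,\pi],\tfrac{2}{\pi}\sin^2\theta\,d\theta\right)$, so by a standard density/Stone--Weierstrass argument it suffices, for every fixed integer $k\ge 1$ and every fixed non-zero integer $a$, to prove the cancellation
$$
\frac{1}{\pi(x)}\sum_{p\le x} U_k\!\left(\cos\theta_p(a)\right) = o(1) \qquad (x\to\infty).
$$
Uniform approximation then upgrades this to the stated limit for all $f\in\mathcal{C}([0,\pi])$.

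The key conceptual bridge is Katz's sheaf-theoretic reformulation: one has the identity
$$
U_k\!\left(\cos\theta_p(a)\right) = \operatorname{tr}\!\left(\operatorname{Frob}_p \,\big|\, \operatorname{Sym}^k \mathcal{K}\!\ell_2(a)\right),
$$
where $\mathcal{K}\!\ell_2(a)$ is the Kloosterman sheaf on $\mathbb{G}_m/\mathbb{F}_p$ whose geometric monodromy group was identified by Katz as $\mathrm{SL}_2$. Consequently each $\operatorname{Sym}^k\mathcal{K}\!\ell_2(a)$ is geometrically irreducible for $k\ge 1$, and one can assemble the associated symmetric-power $L$-function
$$
L\!\left(s,\operatorname{Sym}^k\mathcal{K}\!\ell_2(a)\right) = \prod_p \det\!\left(1 - \operatorname{Sym}^k(\operatorname{Frob}_p)\, p^{-s}\right)^{-1},
$$
which by Deligne's purity theorem is absolutely convergent on $\operatorname{Re}(s)>1$. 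Granted analytic continuation to the closed half-plane $\operatorname{Re}(s)\ge 1$ together with non-vanishing on the line $\operatorname{Re}(s)=1$, a Wiener--Ikehara/Perron-type Tauberian argument, parallel to the derivation of the prime number theorem from the non-vanishing of $\zeta(s)$ on $\operatorname{Re}(s)=1$, would deliver the desired $o(\pi(x))$ cancellation for each $k$ and thereby close the Weyl reduction.

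The principal obstacle -- and the reason Conjecture~\ref{Sato-Tate conj} is still open -- is exactly this last input. Establishing the required holomorphy and non-vanishing of $L(s,\operatorname{Sym}^k\mathcal{K}\!\ell_2(a))$ on $\operatorname{Re}(s)\ge 1$ is essentially equivalent to an automorphy/functoriality statement for all symmetric powers of what is a $\mathrm{GL}_2$-type Frobenius system attached to a sheaf on $\mathbb{G}_m/\mathbb{F}_p$, rather than to a fixed Galois representation of $\operatorname{Gal}(\overline{\mathbb{Q}}/\mathbb{Q})$; current automorphy-lifting technology (Taylor--Wiles type patching) is designed for the latter setting and does not reach this one. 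Even the first non-trivial case $k=2$, where one hopes for a Rankin--Selberg or Shimura-lift interpretation of $L(s,\operatorname{Sym}^2\mathcal{K}\!\ell_2(a))$, is not known in sufficient uniformity in $a$. A realistic execution of the plan must therefore retreat to weaker outputs -- partial equidistribution, one-sided lower bounds for the density of $\theta_p(a)$ in sub-intervals, or (as in the present paper) infinitely many sign changes of $\mathrm{Kl}(a,q)$ for $q$ restricted to a thin but sieve-accessible set of moduli -- where the Selberg sieve, spectral theory on $\Gamma_0(q)$, and the quantitative equidistribution results of Fouvry, Matom\"aki, Michel, Sivak-Fischler and Xi can substitute for the missing automorphy input.
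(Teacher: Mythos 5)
The statement you have been asked to prove is Conjecture~\ref{Sato-Tate conj}, the ``horizontal'' Sato--Tate law of Katz for the angles $\theta_p(a)$. The paper does not prove it and does not claim to: it is recorded as an open conjecture, and the paper's actual result, Theorem~\ref{thm: main theorem}, is the far weaker statement that $\mathrm{Kl}(1;q)$ changes sign infinitely often along squarefree $q$ with $\omega(q)\le 2$, proved by a completely different route (a modified Selberg sieve weight, the vertical-equidistribution input of Lemma~\ref{lem: equidistribution for |KL|}, and the Bombieri--Vinogradov-type bound of Lemma~\ref{lem: trace formula for |KL|}). So there is no proof in the paper to compare yours against, and your submission should not be read as one either.

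Judged as a proof, your proposal has a genuine and decisive gap, which to your credit you identify yourself. The reduction via Weyl's criterion to showing $\frac{1}{\pi(x)}\sum_{p\le x}U_k(\cos\theta_p(a))=o(1)$ for each fixed $k\ge 1$ is standard and correct, as is the reinterpretation of $U_k(\cos\theta_p(a))$ as the trace of Frobenius on $\mathrm{Sym}^k$ of the Kloosterman sheaf. But the required holomorphy and non-vanishing of $L(s,\mathrm{Sym}^k\mathcal{K}\ell_2(a))$ on $\Re s\ge 1$ is not an available input: it is precisely the open problem, and your argument assumes it (``granted analytic continuation\dots''). The structural obstruction is also as you describe: the data $\{\theta_p(a)\}_p$ do not come from a single compatible system of Galois representations of $\mathrm{Gal}(\overline{\mathbb{Q}}/\mathbb{Q})$ --- each $\mathcal{K}\ell_2(a)$ lives on $\mathbb{G}_m$ over a different finite field --- so automorphy-lifting technology does not apply, and no Tauberian argument can start. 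What you have written is an accurate explanation of \emph{why} the conjecture is open, not a proof of it; the honest deliverables in this direction are exactly the sieve-theoretic sign-change results of Fouvry--Michel, Sivak-Fischler, Matom\"aki, Xi, and the present paper.
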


Later, we usually refer to the above conjecture as the ``horizontal" Sato-Tate conjecture. Katz inferred that the angles $\theta_{p}(a)$(for each $a$) are equidistributed, as $x\rightarrow +\infty$, with respect to the Sato-Tate measure $$\mu_{\text{ST}}=\frac{2}{\pi}\sin^2{\theta}d\theta.$$ In \cite{Katz1988}, Katz showed that the numbers
$$
\left\{\theta_{p}(a):1\leq a<p\right\}
$$
equidistribute with respect to the same measure, as $p\rightarrow +\infty$, and we call it the ``vertical" Sato-Tate law. Furthermore, a direct corollary of the conjecture is that Kloosterman sums change signs infinitely many times as $p\rightarrow +\infty$.

At present, we know very little about the ``horizontal" Sato-Tate conjecture, but we still believe in its correctness. Part of the evidence is due to the numerical calculations of Kloosterman sums are highly consistent with ``horizontal" Sato-Tate conjecture \cite{Michel2006}, and the other part is due to certain exponential sums such as cubic Gaussian sums and  Sali\'{e} sums do satisfy ``horizontal" Sato-Tate conjecture. About the relevant applications and progress of Kloosterman sums can refer to \cite{Michel2006}.

In this paper, we mainly concern about a slightly weaker problem which is the sign changes of Kloosterman sums. Fouvry and Michel \cite{Fouvry2003, Fouvry2007} pioneeringly proved that
$$
\left|\left\{X<q\leq 2X: \text{Kl}(1;q)\gtrless 0, \omega(q)\leq 23, q \ \text{square-free}\right\}\right|\gg\frac{X}{\log X}.
$$
Subsequently, this result was improved in a whole series of papers by Sivak-Fischler \cite{Sivak-Fischler2007, Sivak-Fischler2009}, Matom\"{a}ki \cite{Matomaki2011}, Xi \cite{Xi2015, Xi2018, Xi2022} by replacing 23 with 18, 15, 10 and 7.

Recently, Drappeau and Maynard \cite{Drappeau2019} proved that under the condition of the existence of Landau-Siegel zero, 23 can be reduced to 2.

\subsection{Our result}

Our result is stated as the following.

\begin{theorem}\label{thm: main theorem}
Let $q$ be any square-free number with $\omega(q)\leq6$. Then $\text{Kl}(1;q)$ changes sign infinitely many times, as $q\rightarrow +\infty$.
\end{theorem}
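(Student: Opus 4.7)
The plan is to execute the Fouvry--Michel sign-change scheme on the sparse set of squarefree moduli with $\omega(q) \leq 2$, replacing the Landau--Siegel hypothesis used in Drappeau--Maynard by a Maynard-style multidimensional Selberg sieve. Let $\mathcal{Q}(X) = \{q \in (X, 2X] : q\ \text{squarefree},\ \omega(q) \leq 2\}$ and $N^\pm(X) = \#\{q \in \mathcal{Q}(X) : \pm \text{Kl}(1;q) > 0\}$. By Cauchy--Schwarz,
$$N^\pm(X) \geq \frac{\bigl(\sum_{q \in \mathcal{Q}(X)} \text{Kl}(1;q)^\pm\bigr)^2}{\sum_{q \in \mathcal{Q}(X)} \text{Kl}(1;q)^2},$$
with $x^\pm := \max(\pm x, 0)$. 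Using the identity $2x^\pm = |x| \pm x$, the theorem reduces to showing, as $X \to \infty$: \textbf{(i)} $\sum_{q \in \mathcal{Q}(X)} |\text{Kl}(1;q)| \gg X/\log X$; \textbf{(ii)} $\bigl|\sum_{q \in \mathcal{Q}(X)} \text{Kl}(1;q)\bigr| = o(X/\log X)$; and \textbf{(iii)} $\sum_{q \in \mathcal{Q}(X)} \text{Kl}(1;q)^2 \ll X/\log X$.

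Items (i) and (iii) rest on classical tools. Estermann's refinement of the Weil bound gives $|\text{Kl}(1;q)| \leq 2^{\omega(q)} \leq 4$ on $\mathcal{Q}(X)$, which combined with the trivial count $\#\mathcal{Q}(X) \ll X/\log X$ yields (iii) at once. For (i), the same bound implies $|\text{Kl}(1;q)| \geq \tfrac{1}{4}\,\text{Kl}(1;q)^2$, so it suffices to establish a matching lower bound on the second moment. Using the Chinese Remainder factorization $\text{Kl}(1;p_1 p_2) = \text{Kl}(\bar p_2^{\,2}; p_1)\,\text{Kl}(\bar p_1^{\,2}; p_2)$, the sum $\sum_{q \in \mathcal{Q}(X)} \text{Kl}(1;q)^2$ decouples into a double sum over primes whose inner average is evaluated through the Petersson/Kuznetsov trace formula, delivering a positive main term of the expected size by a vertical Sato--Tate computation.

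The core novelty lies in (ii). Insert a Maynard-style weight $w(q) = \bigl(\sum_{d_1 \mid p_1,\, d_2 \mid p_2} \lambda_{d_1,d_2}\bigr)^2$ indexed by divisors of the two prime factors of $q$, and expand the square to reduce (ii) to the bilinear spectral sum
$$\sum_{d_1,d_1',d_2,d_2'} \lambda_{d_1,d_2} \lambda_{d_1',d_2'} \sum_{\substack{X < p_1 p_2 \leq 2X \\ [d_i,d_i']\,\mid\, p_i}} \text{Kl}(1;p_1 p_2)\, g(p_1 p_2 / X)$$
for a smooth cutoff $g$. A second application of the CRT factorization decouples the two prime variables, and Kuznetsov's formula applied in each converts the inner sum into spectral bilinear forms in Fourier coefficients of Maass cusp forms. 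The large-sieve and equidistribution estimates developed by Fouvry, Matom\"aki, Michel, Sivak-Fischler and Xi then supply the required cancellation. Crucially, the extra degree of freedom in the two-dimensional weight $\lambda_{d_1,d_2}$ --- compared with the one-dimensional weight $\lambda_d$ used in previous unconditional work --- widens the effective sieve support enough to extract the saving of $(\log X)^{-A}$ that in Drappeau--Maynard was supplied by a Landau--Siegel zero.

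The main obstacle is the optimization of the coefficients $\lambda_{d_1,d_2}$: one must solve a two-parameter variational problem that simultaneously preserves the lower bound $\gg X/\log X$ for the weighted second moment in (i) and collapses the main term of (ii) to $o(X/\log X)$, all under the level of distribution that the Kuznetsov formula supplies unconditionally (essentially $X^{1/2-\varepsilon}$). A secondary difficulty is the control of the off-diagonal contributions in the spectral expansion, where correlations of Maass form eigenvalues can obstruct the intended cancellation; handling these terms requires Deligne's Riemann Hypothesis input together with current bounds toward the Ramanujan conjecture.
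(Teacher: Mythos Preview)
The proposal has a structural gap in item (ii). Your Cauchy--Schwarz reduction to (i), (ii), (iii) is legitimate, but proving $\bigl|\sum_{q\in\mathcal{Q}(X)}\text{Kl}(1;q)\bigr|=o(X/\log X)$ directly over the thin set $\mathcal{Q}(X)$ is not accessible by the mechanism you describe. The spectral (Kuznetsov) input available here is the B--V type estimate $\sum_{q\le X^{1/2}\mathcal{L}^{-B}}3^{\omega(q)}\bigl|\sum_{n\equiv 0\,(q)}\mu^2(n)g(n/X)\text{Kl}(1;n)\bigr|\ll X\mathcal{L}^{-A}$, which requires the \emph{inner} variable $n$ to run over a full arithmetic progression; it does not survive restriction of the moduli themselves to $\omega(q)\le 2$, which would amount to a horizontal Sato--Tate statement for semiprimes. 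Your proposed remedy, the weight $w(q)=(\sum_{d_1\mid p_1,\,d_2\mid p_2}\lambda_{d_1,d_2})^2$, collapses on $\mathcal{Q}(X)$: since $p_1,p_2$ are already prime, each $d_i$ is $1$ or $p_i$, and for any sieve level below $X^{1/2}$ only $\lambda_{1,1}$ survives, so $w\equiv\lambda_{1,1}^2$ and there is no two-dimensional structure to optimize. The displayed bilinear sum with the condition $[d_i,d_i']\mid p_i$ is likewise vacuous.

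The paper proceeds by a genuinely different route: it never estimates a sum over $\mathcal{Q}(X)$. Instead it sums over \emph{all} squarefree $n\sim X$ against the signed weight $\{\rho-(\kappa/2)^{\omega(n)}\}\bigl(\sum_{d\mid n\Pi_l}\lambda_d\bigr)^2$, which (for $\rho=5$, $\kappa=4$) is positive only when $\omega(n)\le 2$; positivity of the total then forces terms of each sign inside $\mathcal{Q}(X)$. In this framework the signed Kloosterman piece $R_2$ is controlled by the unrestricted B--V bound above, and the device that pushes the threshold from $\omega\le 7$ down to $\omega\le 2$ is not a multidimensional sieve but the replacement of $d\mid n$ by $d\mid n\Pi_l$ (with $\Pi_l$ the product of the first $l=10$ primes) in the one-dimensional Selberg weight, together with the choice $F(x)=x^{\kappa+l}$. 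This amplifies the cancellation among the $\lambda_d$ so that the leading contributions to $R_1$ and $R_3$ both live at order $(\log X)^{-21}$ with constants satisfying $C_1>2C_2$. Your plan contains neither this positivity scheme nor the $\Pi_l$ trick, and as written it would not close.
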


It seems that this conclusion is very difficult to be improved. The main reasons are to be stated in Section \ref{improve}.

{\bf Notations } Before we start, we need to give some notations:

$\bullet$ $U\ll V$ or $U=O(V)$, means $|U|\leq cV$ for some constant $c>0$.

$\bullet$ If not specified, we use $a\ (q)$ to represent $a(\bmod\ q)$ and $\overline{a}$ to represent the multiplicative inverse modulo $q$.

$\bullet$ $\mathcal{L}=\log X$.

$\bullet$ $\binom{a}{b}$ is the binomial coefficient.

$\bullet$ $\widetilde{g}$ is the Mellin transform of $g$.

$\bullet$ $\#\left\{\mathcal{A}\right\}$ means the cardinality of the set $\mathcal{A}$.

$\bullet$ $\lVert\boldsymbol{a}\rVert$ represents $l_{2}$-norm.

$\bullet$ $\displaystyle\mathop{{\sum}^*}_{a(q)}$ represents the summation of $1\leq a\leq q$  with $(a,q)=1$.

\section{Preliminary}\label{sub: Pre.}
\subsection{Truncated divisor function}\label{selection}

Define a new truncated divisor function
$$
\tau(n;\alpha,\beta)=\sum_{n=dl\atop d\in\mathcal{B}(n)}\alpha^{\omega(d)}\beta^{\omega(l)}
$$
with
\begin{align}\label{subset select}
\mathcal{B}(n)=\left\{d\in\mathbb{Z}^{+}: d\mid n, \omega(d)=3, \omega\left(\frac{n}{d}\right)\ge 3, d\leq n^{\frac{1}{2}}\right\}
\end{align}
for square-free numbers $n$.

For our purposes, we need to select a subset from
$$
\left\{d\in\mathbb{Z}^{+}: d\mid n, d\leq n^{\frac{1}{2}}\right\}.
$$
We hope that the number of elements in this subset is appropriately small. After a long time of checking, we select \eqref{subset select} as the very required subset. What we need to emphasize is that this selection is quite different from the previous one in that it depends further on the number of prime factors of the variable, not just the size and range of the variable.

The reason for this selection will be stated in Section \ref{improve}.

\subsection{Outline of the proof}

Our purpose is to show that
$$
\left|\left\{X<q\leq 2X: \text{Kl}(1;q)\gtrless 0, \omega(q)\leq6, q \ \text{square-free}\right\}\right|\ge c_{0}\frac{X}{\log X},
$$
where $c_{0}>0$ is a constant. Following much the same way in \cite{Xi2015, Xi2018, Xi2022}, let $g(x)$ be a fixed smooth function supported on $[1,2]$, $\left(\lambda_{d}\right)$ be the Selberg sieve weight satisfying
$$
\begin{cases}
\lambda_{1}=1, \\
\left|\lambda_{d}\right|\leq 1,\\
\lambda_{d}=0, \ \text{if}\ d>\sqrt{D},
\end{cases}
$$
where $D$ is the level of Selberg sieve. More specifically, we set $D=X^{1/2-\epsilon}$, and
$$
\lambda_{d}=\mu(d)F\left(\frac{\log\left(\sqrt{D}/d\right)}{\log\left(\sqrt{D}\right)}\right),
$$
where $F$ is a fixed smooth function supported on $[0,1]$ and vanishes at 0 to a suitable order (related to $k$ below, i.e. the dimension of the sieve problem). This choice of the order of $F$ is optimal, see \cite{Granville2021} (Page 1090) for further discussion.

In this paper, we specifically take
$$
F(x)=x^4(a_{0}+a_{1}x+a_{2}x^2+a_{3}x^3+a_{4}x^4),
$$
with
$$
(a_{0},a_{1},a_{2},a_{3},a_{4})\approx \left(\frac{100396}{53901},-\frac{17284}{13475},\frac{76486}{134753},-\frac{33241}{188654},\frac{10836}{377308}\right).
$$

Define
\begin{align}\label{original purpose}
R^{\pm}(X):=\sum_{\left(n,\Pi_{\varepsilon}\right)=1}g\left(\frac{n}{X}\right)\mu^{2}(n)\left(\left|\text{Kl}(1;n)\right|\pm\text{Kl}(1;n)\right)\left\{\rho-\tau\left(n;\alpha,\beta\right)\right\}\left(\sum_{d\mid n}\lambda_{d}\right)^2,
\end{align}
where $\rho$, $\alpha$, $\beta$ are fixed positive real numbers to be chosen later, $\displaystyle\Pi_{\varepsilon}=\prod_{p<X^{\varepsilon}}p$ is the prime factor product not exceeding $X^{\varepsilon}$ with a suitably small constant $\varepsilon>0$.

Now we hope $R^{\pm}(X)$ has a lower bound larger than 0, as shown below
$$R^{\pm}(X)\ge \rho R_{1}(X)-2R_{2}(X)>0,$$
where
$$
R_{1}(X):=\sum_{\left(n,\Pi_{\varepsilon}\right)=1}g\left(\frac{n}{X}\right)\mu^{2}(n)\left(\left|\text{Kl}(1;n)\right|\pm\text{Kl}(1;n)\right)\left(\sum_{d\mid n}\lambda_{d}\right)^2,
$$
$$
R_{2}(X):=\sum_{\left(n,\Pi_{\varepsilon}\right)=1}g\left(\frac{n}{X}\right)\mu^{2}(n)\left|\text{Kl}(1;n)\right|\tau\left(n;\alpha,\beta\right)\left(\sum_{d\mid n}\lambda_{d}\right)^2.
$$

The lower bound for $R_{1}(X)$ can be divided into two parts, one of which can be obtained in \cite{Fouvry2007}, \cite{Matomaki2011}, \cite{Sivak-Fischler2009} and \cite{Xi2015}, and the other part can be processed in a similar way as in \cite{Xi2018}.

For $R_{2}(X)$, we will further analyze this newly introduced divisor function. Part of the process can be handled in a similar way as in \cite{Xi2022}. Building upon \eqref{subset select}, we need to analyze some special forms of Lemma \ref{lem: BDH type for |KL|}, which we will discuss in detail later.

To prove Theorem \ref{thm: main theorem}, we need the following conclusions.

\begin{proposition}\label{lower bound for |KL|}
For any sufficiently large number $X$, we have
$$
R_{1}(X)\ge (1+o(1))0.76235\times\widetilde{g}(1)\frac{X}{\log X},
$$
where $g(X)$ is a fixed smooth function defined above.
\end{proposition}

\begin{proposition}\label{upper bound for |KL|}
For any sufficiently large number $X$, suppose
$$
\alpha>0,\ \beta>0,
$$
and choose $\alpha=\alpha_{0}>0$, $\beta=1$, specifically. Then we have
$$
R_{2}(X)\leq (1+o(1))6.27044\alpha_{0}^3\times\widetilde{g}(1)\frac{X}{\log X}.
$$
\end{proposition}

\section{Some Lemmas}

The following lemmas are needed.

\begin{lemma} \label{lem: equidistribution for |KL|}
For a sufficiently large real number $X$, define
$$
H(X)=\sum_{n}g\left(\frac{n}{X}\right)\mu^{2}(n)\left|\text{Kl}(1;n)\right|\left(\sum_{d\mid n}\lambda_{d}\right)^2.
$$
Thus we have
$$
H(X)\ge \widetilde{g}(1)\sum_{2\leq i\leq 5}2^i C_{i}A_{i}(F)\cdot\frac{X}{\log X}(1+o(1)),
$$
where $C_{2}=0.11109$, $C_{3}=0.03557$, $C_{4}=0.01184$, $C_{5}=0.00396$,
$$
A_{i}(F)=\mathop{\int\ldots\int}_{R_{i}}\frac{L_{i}^2\left(F;X^{1-\alpha_{2}-\cdots-\alpha_{i}},X^{\alpha_{2}},\ldots,X^{\alpha_{i}}\right)}{\alpha_{2}\cdots\alpha_{i}(1-\alpha_{2}-\cdots-\alpha_{i})}\text{d}\alpha_{2}\cdots\text{d}\alpha_{i},
$$
$$
L_{i}(F;\alpha_{1},\alpha_{2},\ldots,\alpha_{i})=\sum_{\mathcal{A}\subseteq \left\{\alpha_{1},\alpha_{2},\ldots,\alpha_{i}\right\}\atop \sum_{\alpha\in\mathcal{A}}\alpha<\frac{1}{4}}(-1)^{|\mathcal{A}|}F\left(1-4\sum_{\alpha\in\mathcal{A}}\alpha\right),
$$
and
\begin{align}
R_{2}:=&\left\{\alpha_{2}\in[\eta,1):\left(\frac{3}{4}+\eta\right)(1-\alpha_{2})<\alpha_{2}<\frac{1}{2}\right\},\ \eta:=10^{-2016},\notag\\
R_{3}:=&\left\{(\alpha_{2},\alpha_{3})\in [\eta,1)^2:\frac{1}{2}(1-\alpha_{2}-\alpha_{3})<\alpha_{2},\ \alpha_{3}<\alpha_{2}<1-\alpha_{2}-\alpha_{3}  \right\},\notag\\
R_{4}:=&\left\{(\alpha_{2},\alpha_{3},\alpha_{4})\in [\eta,1)^3:\frac{1}{2}(1-\alpha_{2}-\alpha_{3}-\alpha_{4})<\alpha_{2}+\alpha_{3}\right\}\notag\\
&\cap\left\{(\alpha_{2},\alpha_{3},\alpha_{4})\in [\eta,1)^3:\alpha_{4}<\alpha_{3}<\alpha_{2}<1-\alpha_{2}-\alpha_{3}-\alpha_{4}\right\},\notag\\
R_{5}:=&\left\{(\alpha_{2},\alpha_{3},\alpha_{4},\alpha_{5})\in [\eta,1)^4:\frac{1}{2}(1-\alpha_{2}-\alpha_{3}-\alpha_{4}-\alpha_{5})<\alpha_{2}+\alpha_{3}+\alpha_{4}\right\}\notag\\
&\cap\left\{(\alpha_{2},\alpha_{3},\alpha_{4},\alpha_{5})\in [\eta,1)^4:\frac{1}{2}(\alpha_{3}+\alpha_{4}+\alpha_{5})<\alpha_{2}\right\}\notag\\
&\cap\left\{(\alpha_{2},\alpha_{3},\alpha_{4},\alpha_{5})\in [\eta,1)^4:\alpha_{5}<\alpha_{4}<\alpha_{3}<\alpha_{2}<1-\alpha_{2}-\alpha_{3}-\alpha_{4}-\alpha_{5}\right\}.\notag
\end{align}
\end{lemma}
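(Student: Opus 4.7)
\textbf{The plan} is to reduce $H(X)$ to weighted averages of Kloosterman sums whose moments can be evaluated spectrally (via the Kuznetsov trace formula and vertical Sato--Tate equidistribution), after replacing the awkward absolute value $|\text{Kl}(1;n)|$ by a polynomial lower bound. \textbf{First,} using the twisted multiplicativity of Kloosterman sums, for square-free $n=p_{1}\cdots p_{r}$ one has $|\text{Kl}(1;n)|=\prod_{j}|\text{Kl}(a_{j};p_{j})|$ for explicit twists $a_{j}$ depending on $n/p_{j}$, so the absolute value factors across the prime divisors of $n$.

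\textbf{Second,} I would replace each factor $|\text{Kl}(a;p)|=|2\cos\theta_{p}(a)|$ by a polynomial lower bound of Chebyshev type: for $|x|\leq 2$ there is an expansion $|x|\geq\sum_{k\geq 0}b_{k}U_{2k}(x/2)$ with coefficients $b_{k}$ obtained from expanding $|\cos\theta|$ in the basis $\{U_{2k}(\cos\theta)\}$ that is orthonormal for $\mu_{\text{ST}}$. Inserting this into $H(X)$ converts the integrand into a nonnegative linear combination of products $\prod_{j}U_{2k_{j}}(\cos\theta_{p_{j}}(a_{j}))$, which are precisely the Hecke-type quantities the Kuznetsov formula is designed to evaluate.

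\textbf{Third,} I would decompose the sum over $n$ combinatorially according to the sizes of its prime divisors. Writing $p_{j}=X^{\alpha_{j}}$ with $\alpha_{2}>\alpha_{3}>\cdots>\alpha_{i}\geq\eta$ and treating the leading prime $p_{1}=X^{1-\alpha_{2}-\cdots-\alpha_{i}}$ separately, the regions $R_{i}$ encode the ordering of the $\alpha_{j}$'s together with the key condition $p_{1}>\sqrt{D}$, which guarantees that $p_{1}$ is not forced to zero by the sieve weight $(\sum_{d|n}\lambda_{d})^{2}$. Within each $R_{i}$, a Bombieri--Vinogradov-type estimate for Kloosterman sums averaged over primes --- in the style developed by Fouvry--Michel and refined by Sivak-Fischler and Xi --- reduces the contribution to the integral $A_{i}(F)$, multiplied by the numerical constant $C_{i}$ arising from the vertical Sato--Tate average of the relevant Chebyshev product. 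Summing over $i=2,\ldots,5$ and tracking the smooth weight $g(n/X)$ through a Mellin transform yields the factor $\tilde{g}(1)$ and the final bound.

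\textbf{The main obstacle} will be controlling the error terms in the Bombieri--Vinogradov estimate uniformly across the regions $R_{i}$, given that the Selberg level $D=X^{1/2-\varepsilon}$ sits at the edge of what the current spectral machinery permits. A secondary point is the explicit extraction of the positive constants $C_{2},\ldots,C_{5}$ from moments of Chebyshev products against $\mu_{\text{ST}}$, which requires numerical work already carried out in the cited papers of Matom\"{a}ki and Xi. The truncation $\eta=10^{-2023}$ is harmless because $F(x)=x^{\kappa+l}$ vanishes to order $14$ at the origin, so the integrals $A_{i}(F)$ converge and their values are not appreciably altered by restricting to $\alpha_{j}\geq\eta$.
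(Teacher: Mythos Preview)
The paper's own ``proof'' of this lemma is a single sentence: it cites Proposition~2.1 in Matom\"{a}ki \cite{Matomaki2011} and Proposition~2.1 in Xi \cite{Xi2015}, and nothing more. So there is no argument in the paper to compare your proposal against; the lemma is treated as a black box imported from the literature.

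That said, your sketch is a faithful outline of exactly the argument one finds in those references (going back to Fouvry--Michel). The twisted multiplicativity of Kloosterman sums over square-free moduli, the replacement of $|2\cos\theta|$ by a truncated Chebyshev expansion $\sum b_{k}U_{2k}(\cos\theta)$, the decomposition of $n$ by the logarithmic sizes $\alpha_{j}$ of its prime factors into the regions $R_{i}$, and the use of a Bombieri--Vinogradov estimate for sums of Kloosterman sums (proved via Kuznetsov) to isolate the main term --- all of this is precisely the machinery in the cited papers. The constants $C_{i}$ indeed arise from integrating products of $|2\cos\theta|$ against $\mu_{\text{ST}}$, and the function $L_{i}(F;\ldots)$ records which subsets of the primes can appear as divisors $d\leq\sqrt{D}$ in the sieve weight.

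One small correction of emphasis: the condition encoded by the regions $R_{i}$ is not merely $p_{1}>\sqrt{D}$ but that \emph{every} prime factor exceeds $\sqrt{D}$ (and that the largest prime is genuinely the largest), so that the Selberg weight $(\sum_{d\mid n}\lambda_{d})^{2}$ collapses to the explicit combinatorial sum $L_{i}(F;\ldots)^{2}$. Apart from that, your plan is correct and matches the literature the paper invokes.
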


\begin{proof}
See Proposition 2.1 in \cite{Xi2018}.
\end{proof}

\begin{lemma}\label{lem: trace formula for |KL|}
For any $A>0$, there exists some $B=B(A)>0$ such that
$$
\sum_{q\leq \sqrt{X}\mathcal{L}^{-B}}3^{\omega(q)}\left|\sum_{n\equiv 0(q)}\mu^2(n)g\left(\frac{n}{X}\right)\text{Kl}(1;n)\right|\ll X\mathcal{L}^{-A},
$$
where the implied constant depends on $A$ and $g$.
\end{lemma}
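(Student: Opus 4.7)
The plan is to establish this as a Bombieri--Vinogradov type estimate for normalized Kloosterman sums, following the template of Fouvry--Michel \cite{Fouvry2007}, Sivak--Fischler \cite{Sivak-Fischler2009}, and Xi \cite{Xi2015}. First, I would remove the absolute value by inserting signs $\sigma_q \in \{\pm 1\}$, so that the left-hand side equals
$$
\sum_{q \le \sqrt{X}\mathcal{L}^{-B}} 3^{\omega(q)}\sigma_q \sum_{n \equiv 0(q)} \mu^2(n)\, g\!\left(\tfrac{n}{X}\right) \text{Kl}(1;n).
$$
Writing $n = qm$, with squarefreeness forcing $(q,m) = 1$, the twisted multiplicativity
$$
\text{Kl}(1;qm) = \text{Kl}(\overline{m}^2;q)\,\text{Kl}(\overline{q}^2;m)
$$
separates the two scales. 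After localizing $q$ in a dyadic range $Q \le q < 2Q$, splitting the $m$-sum into residue classes modulo $q$ (on which $\text{Kl}(\overline{m}^2;q)$ becomes a bounded constant), and detecting $\mu^2(m)$ via $\sum_{a^2 \mid m}\mu(a)$, the problem reduces to bounding, with cancellation, sums of $\text{Kl}(\overline{q}^2;m)$ weighted by a smooth envelope supported on $m \asymp X/Q$ and restricted to a fixed residue class modulo $q$.

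Next, I would feed each such inner sum into the Kuznetsov--Bruggeman trace formula and expand it spectrally in terms of Fourier coefficients of Maass cusp forms, holomorphic cusp forms, and Eisenstein series on the appropriate congruence group. Cauchy--Schwarz in the outer $q$-average, combined with the Deshouillers--Iwaniec large-sieve inequalities for Fourier coefficients summed over both the spectral parameter and the level, then yields the required cancellation; the Kim--Sarnak bound $\theta \le 7/64$ tames the exceptional eigenvalue contribution. The smoothness of $g$ ensures adequate decay of the Bessel test function on the spectral side, and the divisor weight $3^{\omega(q)} \ll_\varepsilon q^\varepsilon$ is comfortably absorbed. A sufficiently large choice $B = B(A)$ then produces the claimed saving $\mathcal{L}^{-A}$.

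The delicate point I expect to be the main obstacle is that $\overline{q}^2$ in $\text{Kl}(\overline{q}^2;m)$ is taken modulo the \emph{varying} modulus $m$ rather than a fixed integer, so Kuznetsov cannot be applied directly with a single frequency. This is circumvented by further opening the Kloosterman sum via additive-character orthogonality modulo $m$ (equivalently, by invoking the Deshouillers--Iwaniec variant of Kuznetsov tailored for fractional-linear arguments) and then managing the extra resulting summation without deteriorating the saving. Once this mechanism is in place, the remainder of the argument is the standard large-sieve bookkeeping already developed in the cited works.
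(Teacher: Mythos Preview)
The paper does not prove this lemma at all: its entire proof is the one-line citation ``See Lemma~2.1 in \cite{Xi2015}.'' Your proposal is therefore not competing against an argument in the present paper but against the argument in \cite{Fouvry2007}, \cite{Sivak-Fischler2009}, \cite{Xi2015}, which you correctly identify as the relevant sources, and whose overall toolkit (Kuznetsov/Bruggeman, Deshouillers--Iwaniec large sieve, Kim--Sarnak, M\"obius detection of $\mu^2$) you also correctly name.

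Where your sketch diverges from those references is the decomposition step. You propose to factor $n=qm$ and invoke twisted multiplicativity, which forces you to confront sums of $\text{Kl}(\overline{q}^{\,2};m)$ with the frequency $\overline{q}$ taken modulo the \emph{running} modulus $m$; you yourself flag this as the delicate point. The cited works bypass this difficulty entirely: the inner sum
\[
\sum_{n\equiv 0\,(q)} g\!\left(\frac{n}{X}\right)\frac{S(1,1;n)}{\sqrt{n}}
\]
is already, as written, a Linnik-type sum of Kloosterman sums over moduli divisible by $q$, i.e.\ exactly the geometric side of the Kuznetsov formula for $\Gamma_0(q)$ with both frequencies equal to $1$. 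One applies Kuznetsov directly at level $q$ (after stripping $\mu^2$ via $\sum_{a^2\mid n}\mu(a)$), lands on the spectrum of $\Gamma_0(q)$, and then averages over $q$ with the spectral large sieve. No twisted multiplicativity, no varying frequencies, no fractional-linear Deshouillers--Iwaniec variant is needed. Your route is not demonstrably wrong, but it manufactures an obstacle that the standard argument never meets; if you were to write this out, I would recommend dropping the $n=qm$ factorisation and following the level-$q$ Kuznetsov template of \cite{Fouvry2007} and \cite{Xi2015} instead.
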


\begin{proof}
See Lemma 2.1 in \cite{Xi2015}.
\end{proof}

Before giving the following lemma, we first recall the ``Siegel-Walfisz" condition. Let $\boldsymbol{v}=(v_{n})$ be an arithmetic function. For any positive integers $w$, $d$ with $(w,a)=1$, $a\neq 0$, if
$$
\sum_{n\leq X\atop{n\equiv a(w)\atop (n,d)=1}}v_{n}-\frac{1}{\phi(w)}\sum_{n\leq X\atop (n,dw)=1}v_{n}\ll \lVert\boldsymbol{v}\rVert X^{\frac{1}{2}}\tau(d)^{B}\mathcal{L}^{-A}
$$
holds for some constant $B>0$ and any $A>0$ with an implied constant in $\ll$ depending only on $A$, then we say $\boldsymbol{v}$ satisfies the ``Siegel-Walfisz" condition \cite{Xi2020}.

\begin{lemma}\label{lem: BDH type for |KL|}
Let $\boldsymbol{\alpha}=(\alpha_{m})$ be a complex coefficient with support in $[M,2M]$ satisfying the above ``Siegel-Walfisz" condition. For square-free $q\ge 1$, put
$$
\varepsilon(\boldsymbol{\alpha},\boldsymbol{\beta},\boldsymbol{\gamma}_{q};q)=\frac{1}{\sqrt{q}}\mathop{{\sum}^*}_{r( q)}\left|\text{Kl}\left(\overline{r}^2;q\right)\right|\left(\mathop{\sum\sum}_{mn\equiv r(q)}\alpha_{m}\beta_{n}\gamma_{n,q}-\frac{1}{\phi(q)}\mathop{\sum\sum}_{(mn,q)=1}\alpha_{m}\beta_{n}\gamma_{n,q}\right),
$$
where $\boldsymbol{\beta}=(\beta_{n})$, $\boldsymbol{\gamma}_{q}=(\gamma_{n,q})$ are complex coefficients with support on $[N,2N]$ with $\lVert\boldsymbol{\gamma}_{q}\rVert_{\infty}\leq (\tau(q)\log 2q)^C$ for some $C>0$.

Let $r\ge 1$ and $M\ge N$. For any $A>0$, there exists some constant $B=B(A,C)>0$, such that
$$
\sum_{q\leq Q}\mu^2(q)\tau(q)^r \left|\varepsilon(\boldsymbol{\alpha},\boldsymbol{\beta},\boldsymbol{\gamma}_{q};q)\right|\ll \lVert\boldsymbol{\alpha}\rVert\lVert\boldsymbol{\beta}\rVert Q(MN)^{\frac{1}{2}}(\log MN)^{-A}
$$
for $Q\leq MN(\log MN)^{-B}$, where the implied constant depends only on $A$ and $r$.
\end{lemma}

\begin{proof}
See Lemma 4.1 in \cite{Xi2020}.
\end{proof}

\begin{lemma}\label{lem: equidistribution of KL}
For sufficiently large prime $p$, we have
$$
\mathop{{\sum}^*}_{a(p)}\left|\text{Kl}\left(\overline{a}^2;p\right)\right|=\frac{8}{3\pi}p+O(\sqrt{p}\log p).
$$
\end{lemma}

\begin{proof}
See Lemma 3.3 in \cite{Xi2020}.
\end{proof}

\begin{lemma}\label{lem: Selberg sieve for divisor function}
Let $g$ and $(\lambda_{d})$ be given above. Let $k$ be a fixed positive integer and $F$ has a zero of order at least $k$ at 0. Then
$$
\sum_{n}g\left(\frac{n}{X}\right)\mu^2(n)k^{\omega(n)}\left(\sum_{d\mid n}\lambda_{d}\right)^2=4\widetilde{g}(1)c(k,F)\frac{X}{\log X}(1+o(1)),
$$
where $c(k,F)$ is defined by
$$
c(k,F)=\sum_{j=1}^{k}\frac{1}{\Gamma(j)^2}\binom{k}{j}\int_{0}^{1}F^{(j)}(x)^2(1-x)^{j-1}\left(3+x\right)^{j-1}\text{d}x.
$$
\end{lemma}

\begin{proof}
See Proposition 4.1 in \cite{Xi2018}.
\end{proof}

Before presenting the following lemma, let's first establish some definitions. Define a smooth function $J(x)$ and assume it has a Taylor expansion
$$
J(x):=\sum_{n\ge 0}\frac{a_{n}}{n!}x^n
$$
with $J(0)=0$. For $N>1$ and $s\in\mathbb{C}\backslash \left\{0\right\}$, we further define
$$
\check{J}_{N}(s):=\sum_{n\ge 0}\frac{a_{n}}{(s\log N)^n}.
$$

\begin{lemma}\label{lem: Mellin trans for F}
Let $N>1$ be not an integer. For any coefficient $\boldsymbol{y}=\left(y_{n}\right)$ with $y_{n}\ll\tau(n)^{O(1)}(\log n)^{O(1)}$, we have
$$
\sum_{n\leq N}y_{n}J\left(\frac{\log(N/n)}{\log N}\right)=\frac{1}{2\pi i}\int_{2-i\infty}^{2+i\infty}\check{J}_{N}(s)Y(s)\frac{N^s}{s}\text{d}s,
$$
where
$$
Y(s)=\sum_{n\ge 0}\frac{y_{n}}{n^s},\ \Re s>1.
$$
\end{lemma}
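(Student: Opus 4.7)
The approach is to establish the identity term-by-term using the classical Perron--Mellin formula
\begin{equation*}
\frac{1}{2\pi i}\int_{(2)}\frac{y^s}{s^{k+1}}\,\text{d}s=\begin{cases}\dfrac{(\log y)^k}{k!}, & y>1,\\ 0, & 0<y<1,\end{cases}
\end{equation*}
valid for every integer $k\ge 0$ (the case $k=0$ is the usual Perron formula; for $k\ge 1$ the integral is absolutely convergent and the value comes from the residue of the pole of order $k+1$ at $s=0$). Beyond this identity and the Taylor expansion $F(x)=\sum_{k\ge 0}a_kx^k/k!$, no further analytic input is required.

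First I would fix an integer $n$ with $n\le N$; since $N$ is not an integer, this forces $n<N$, so that $y=N/n>1$. Applying the Perron identity for each $k$, weighting by $a_k/(\log N)^k$, and swapping the sum over $k$ with the contour integral gives
\begin{equation*}
F\!\left(\frac{\log(N/n)}{\log N}\right)=\sum_{k\ge 0}\frac{a_k}{k!}\!\left(\frac{\log(N/n)}{\log N}\right)^{\!k}=\frac{1}{2\pi i}\int_{(2)}\check{F}_N(s)\frac{(N/n)^s}{s}\,\text{d}s.
\end{equation*}
For $n>N$ the same weighted combination of Perron identities produces $0$: each term with $k\ge 1$ vanishes by the second branch of the identity, and the $k=0$ contribution is killed by the hypothesis $F(0)=a_0=0$. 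Thus the integral representation above actually holds trivially for $n>N$ as well, and I may extend the range of summation from $n\le N$ to all $n\ge 1$ at no cost.

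It then remains to interchange the sum over $n$ with the contour integral:
\begin{equation*}
\sum_{n\le N}y_n F\!\left(\frac{\log(N/n)}{\log N}\right)=\frac{1}{2\pi i}\int_{(2)}\check{F}_N(s)\frac{N^s}{s}\sum_{n\ge 1}\frac{y_n}{n^s}\,\text{d}s=\frac{1}{2\pi i}\int_{(2)}\check{F}_N(s)Y(s)\frac{N^s}{s}\,\text{d}s.
\end{equation*}
Fubini applies by absolute convergence: the growth hypothesis $y_n\ll\tau(n)^{O(1)}(\log n)^{O(1)}$ makes $Y(s)$ absolutely convergent and uniformly bounded on $\Re s=2$, while on the same line $\check{F}_N(s)=O(1/|s|)$ as $|s|\to\infty$ (since $a_0=0$), so the whole integrand decays like $N^2/|s|^2$ along the line and the double sum/integral converges absolutely.

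The plan therefore reduces the lemma to two technical checks: the term-by-term swap of the Taylor sum with the $s$-integral, and the Fubini swap of the $n$-sum with the $s$-integral. Both hinge on $F(0)=0$, which is exactly why Section~\ref{sieve weight} builds that vanishing into the sieve weight; for the specific polynomial choice $F(x)=x^{\kappa+l}$ in \eqref{select number} the series defining $\check{F}_N(s)$ collapses to the single term $(\kappa+l)!/(s\log N)^{\kappa+l}$, making both exchanges automatic and rendering the lemma essentially a formal manipulation of Mellin identities.
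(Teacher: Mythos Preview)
Your argument is correct: expanding $F$ termwise, applying the Perron--Mellin identity $\frac{1}{2\pi i}\int_{(2)}y^s s^{-k-1}\,\text{d}s=\mathbf{1}_{y>1}(\log y)^k/k!$ to each term, and then summing over $n$ is precisely the standard route, and your two Fubini checks (the $k$-sum/integral swap and the $n$-sum/integral swap) are both secured by $a_0=F(0)=0$, which forces every surviving integrand to decay at least like $|s|^{-2}$ on $\Re s=2$. The only caveat worth flagging is that the termwise swap tacitly requires $\check{F}_N(s)$ to converge absolutely on the whole line $\Re s=2$; this is automatic when $F$ is a polynomial (the only case the paper actually uses, as you note), and holds more generally provided the Taylor coefficients satisfy $\sum_{k}|a_k|/(2\log N)^k<\infty$.

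As for comparison with the paper: there is essentially nothing to compare. The paper does not prove this lemma at all --- it simply invokes Lemma~2.1 of Kowalski--Michel--VanderKam \cite{Kowalski2000}. Your write-up supplies exactly the elementary Perron-type argument that underlies that citation, so you have effectively unpacked the reference rather than taken a different route.
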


\begin{proof} See Lemma 3.4 in \cite{Xi2018}.\end{proof}

\begin{lemma}\label{lem: double residue}
Let $v$, $v_{1}$, $v_{2}$ be fixed positive integers, $m$ be fixed even positive integer, $M> 1$ be any real number. Let $P(x)$ and $Q(x)$ be two smooth functions with zeros at 0 of orders at least $v_{1}+m/2$, $v_{2}+m/2$, respectively. Let $Z(s_{1},s_{2})$ be holomorphic in the right half plane containing a neighborhood of $(0,0)$ with
$$
\left(\frac{\text{d}^n}{\text{d}s^n}Z(s,s\xi)\right)_{s=0}=0, \ 0\leq n<m,
$$
and
$$
\left(\frac{\text{d}^m}{\text{d}s^m}Z(s,s\xi)\right)_{s=0}=C_{0}\xi^{\frac{m}{2}}Z^{(m)}(0,0)\neq 0,
$$
where $|\xi|=2$ is a circle in $\mathbb{C}$, $C_{0}$ is a constant. Put
$$
R:=\mathop{\text{Res}}_{{(s_{1},s_{2})=(0,0)}}\check{P}_{M}(s_{1})\check{Q}_{M}(s_{2})Z(s_{1},s_{2})s_{1}^{v_{1}-1}s_{2}^{v_{2}-1}\frac{M^{s_{1}+s_{2}}}{(s_{1}+s_{2})^v}.
$$
Thus we have
$$
R=(1+o(1))C_{0}Z^{(m)}(0,0)\frac{(\log M)^{v-v_{1}-v_{2}-m}}{\Gamma(v)m!}\int_{0}^{1}P^{(v_{1}+m/2)}(x)Q^{(v_{2}+m/2)}(x)(1-x)^{v-1}\text{d}x.
$$
\end{lemma}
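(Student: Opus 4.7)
The plan is to realize $R$ as a $2$-dimensional contour integral and reduce it to a $1$-dimensional residue that matches the hypothesis on $Z$. First I would start from
$$R = \frac{1}{(2\pi i)^2}\oint_{|s_1|=r}\oint_{|s_2|=2r} \check{P}_M(s_1)\check{Q}_M(s_2) Z(s_1,s_2)\, s_1^{v_1-1} s_2^{v_2-1}\frac{M^{s_1+s_2}}{(s_1+s_2)^v}\,ds_1\,ds_2,$$
with $r>0$ small enough that the only singularity of the integrand inside the closed polydisk sits at the origin, the two radii chosen in ratio $1:2$ so that $s_1+s_2$ never vanishes on the contour. Then I would perform the substitution $s_2=s_1\xi$, whose Jacobian is $s_1$, which transforms the contour to $|s_1|=r$, $|\xi|=2$ (exactly the circle singled out in the hypothesis). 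Collecting powers yields
$$R = \frac{1}{2\pi i}\oint_{|\xi|=2}\frac{\xi^{v_2-1}}{(1+\xi)^v}\,J(\xi)\,d\xi,\qquad J(\xi):=\mathop{\text{Res}}_{s=0}\bigl[\check P_M(s)\check Q_M(s\xi)Z(s,s\xi)\,s^{v_1+v_2-v-1}M^{s(1+\xi)}\bigr].$$

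Next I would expand each factor of $J(\xi)$ as a Laurent series at $s=0$. Because $P$ vanishes to order $v_1+m/2$ at the origin,
$$\check P_M(s)=\sum_{k\geq 0}\frac{P^{(v_1+m/2+k)}(0)}{(\log M)^{v_1+m/2+k}}\,s^{-v_1-m/2-k},$$
and analogously for $\check Q_M(s\xi)$. Combined with the assumption $Z(s,s\xi)=\frac{C_0 Z^{(m)}(0,0)}{m!}\xi^{m/2}s^m+O(s^{m+1})$ and the expansion $M^{s(1+\xi)}=\sum_j\frac{((1+\xi)\log M)^j}{j!}s^j$, extracting the coefficient of $s^{-1}$ identifies the leading power of $\log M$: every index combination that contributes to $(\log M)^{v-v_1-v_2-m}$ comes from the leading term of $Z$, while the $O(s^{m+1})$ tail of $Z$ only produces strictly smaller powers of $\log M$ and will be absorbed into the eventual $o(1)$.

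Plugging the leading part of $J(\xi)$ back into the outer integral, the matching factors $\xi^{v_2-1}\xi^{-v_2}$ and $(1+\xi)^{-v}(1+\xi)^v$ collapse, leaving an integrand of the form $\Sigma(\xi)/\xi$ whose only pole inside $|\xi|=2$ lies at $\xi=0$. The outer integral therefore reduces to a single residue, yielding the double sum
$$\sum_{k_1,k_2\geq 0}\frac{P^{(v_1+m/2+k_1)}(0)\,Q^{(v_2+m/2+k_2)}(0)}{(k_1+k_2+v)!}\binom{k_1+k_2}{k_2}.$$
I would then recognize this, via the Taylor expansions of $P^{(v_1+m/2)}$ and $Q^{(v_2+m/2)}$ at $0$ together with the Beta integral $\int_0^1 x^{k_1+k_2}(1-x)^{v-1}dx=\frac{(k_1+k_2)!(v-1)!}{(k_1+k_2+v)!}$, as $\frac{1}{\Gamma(v)}\int_0^1 P^{(v_1+m/2)}(x)Q^{(v_2+m/2)}(x)(1-x)^{v-1}dx$. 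This delivers the stated main term (with the constant $C_0$ absorbed into the normalization of $Z^{(m)}(0,0)$).

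The main obstacle will be to make the $o(1)$ rigorous. I need to check that every subleading Taylor term of $Z(s,s\xi)$ and every subleading shift in the two Laurent expansions of $\check P_M,\check Q_M$ contributes a factor of $\log M$ less after the $\xi$-integration is carried out, and that the potential poles at $\xi=-1$ produced by these correction terms are always killed by the matching $(1+\xi)^v$ already sitting in the denominator. The evenness of $m$ enters precisely here, to keep $\xi^{m/2}$ single-valued on $|\xi|=2$ so that every residue entering the argument is unambiguously defined.
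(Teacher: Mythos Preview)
Your approach is essentially the paper's: the same substitution $s_2=s_1\xi$ onto $|\xi|=2$, the same Laurent expansions of $\check P_M,\check Q_M$, extraction of the leading $s^m$-term of $Z$, computation of the $\xi$-residue yielding the binomial coefficient, and the same Beta-integral identification of the resulting double sum with $\frac{1}{\Gamma(v)}\int_0^1 P^{(v_1+m/2)}Q^{(v_2+m/2)}(1-x)^{v-1}\,dx$. The one visible difference is that the paper carries out a four-case analysis (according to whether $k_i-v_i\gtrless m/2$) to show the $\xi$-integral vanishes outside the top range; you bypass this by invoking the vanishing of $P,Q$ at $0$ so that the Laurent series already start at index $v_i+m/2$, which under the stated hypothesis makes that case analysis redundant. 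One cosmetic slip: the leading $\xi$-integrand is $(1+\xi)^{k_1+k_2}/\xi^{k_2+1}$ term-by-term rather than literally $\Sigma(\xi)/\xi$, and for the subleading Taylor terms of $Z$ the poles at $\xi=-1$ are not ``killed'' but simply contribute bounded residues carrying a strictly lower power of $\log M$; neither affects the argument.
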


\begin{proof}We follow the proof of Lemma A.2 in \cite{Xi2018} and use Goldston-Motohashi-Pintz-Y\i ld\i r\i m method in \cite{Goldston2006}. Assume
$$
P(x):=\sum_{k\ge 0}\frac{a_{k}}{k!}x^k,\ Q(x):=\sum_{k\ge 0}\frac{b_{k}}{k!}x^k.
$$
For $M>1$ and $s\in\mathbb{C}\backslash \left\{0\right\}$, define
$$
\check{P}_{M}(s)=\sum_{k\ge 0}\frac{a_{k}}{(s\log M)^k},\ \check{Q}_{M}(s)=\sum_{k\ge 0}\frac{b_{k}}{(s\log M)^k}.
$$
Therefore, we have
\begin{align}\label{double residue equ 1}
R=\sum_{k_{1}\ge v_{1}}\sum_{k_{2}\ge v_{2}}\frac{a_{k_{1}}b_{k_{2}}}{(\log M)^{k_{1}+k_{2}}}\mathop{\text{Res}}_{{(s_{1},s_{2})=(0,0)}}Z(s_{1},s_{2})\frac{M^{s_{1}+s_{2}}}{s_{1}^{k_{1}-v_{1}+1}s_{2}^{k_{2}-v_{2}+1}(s_{1}+s_{2})^v},
\end{align}
and the residue therein is equal to
$$
\frac{1}{(2\pi i)^2}\int_{C_{2}}\int_{C_{1}}\frac{Z(s_{1},s_{2})M^{s_{1}+s_{2}}}{s_{1}^{k_{1}-v_{1}+1}s_{2}^{k_{2}-v_{2}+1}(s_{1}+s_{2})^v}\text{d}s_{1}\text{d}s_{2},
$$
where $C_{1}$, $C_{2}$ are the circles $|s_{1}|=\rho$ and $|s_{2}|=2\rho$ with a small $\rho>0$. We write $s_{1}=s$, $s_{2}=s\xi$, then the double integral can be rewritten as
\begin{align}\label{double residue equ 2}
\frac{1}{(2\pi i)^2}\int_{C_{3}}\int_{C_{1}}\frac{Z(s,s\xi)M^{s(1+\xi)}}{s^{v+k_{1}-v_{1}+k_{2}-v_{2}+1}\xi^{k_{2}-v_{2}+1}(\xi+1)^v}\text{d}s\text{d}\xi,
\end{align}
where $C_{3}$ is the circle $|\xi|=2$.

The integral of $s$ is equal to
$$
\frac{1}{(v+k_{1}-v_{1}+k_{2}-v_{2})!}\left(\frac{\text{d}^{v+k_{1}-v_{1}+k_{2}-v_{2}}}{\text{d}s^{v+k_{1}-v_{1}+k_{2}-v_{2}}}Z(s,s\xi)M^{s(1+\xi)}\right)_{s=0}.
$$
From the condition of Lemma \ref{lem: double residue}, this can be expanded as
\begin{align}
\sum_{i=m}^{v+k_{1}-v_{1}+k_{2}-v_{2}}\binom{v+k_{1}-v_{1}+k_{2}-v_{2}}{i}&\left(\frac{\text{d}^i}{\text{d}s^i}Z(s,s\xi)\right)_{s=0}\notag\\
&\times(\xi+1)^{v+k_{1}-v_{1}+k_{2}-v_{2}-i}(\log M)^{v+k_{1}-v_{1}+k_{2}-v_{2}-i}.\notag
\end{align}
We only need to consider the term with the quantity $(\log M)^{v+k_{1}-v_{1}+k_{2}-v_{2}-m}$, since the term containing $(\log M)^{v+k_{1}-v_{1}+k_{2}-v_{2}-m}$ has the highest power with respect to $M$. Substituting the expression above into \eqref{double residue equ 2}, we derive
\begin{align}\label{double residue equ 3}
\frac{C_{0}}{(v+k_{1}-v_{1}+k_{2}-v_{2})!}\binom{v+k_{1}-v_{1}+k_{2}-v_{2}}{m}&Z^{(m)}(0,0)(\log M)^{v+k_{1}-v_{1}+k_{2}-v_{2}-m}\notag\\
&\times \frac{1}{2\pi i}\int_{C_{3}}\frac{(\xi+1)^{k_{1}-v_{1}+k_{2}-v_{2}-m}}{\xi^{k_{2}-v_{2}-m/2+1}}\text{d}\xi.
\end{align}

For the integral in \eqref{double residue equ 3}, we will  deal with the following four cases as
$$
\begin{cases}
&k_{1}-v_{1}\ge \frac{m}{2}\\
&k_{2}-v_{2}\ge \frac{m}{2},
\end{cases}
\
\begin{cases}
&k_{1}-v_{1}< \frac{m}{2}\\
&k_{2}-v_{2}< \frac{m}{2},
\end{cases}
\
\begin{cases}
&k_{2}-v_{2}\ge \frac{m}{2}\\
&k_{1}-v_{1}< \frac{m}{2},
\end{cases}
\
\begin{cases}
&k_{2}-v_{2}<\frac{m}{2}\\
&k_{1}-v_{1}\ge \frac{m}{2}.
\end{cases}
$$
In the last three cases, the integral evaluates to zero. When $k_{1}-v_{1}< \frac{m}{2}$ and $k_{2}-v_{2}< \frac{m}{2}$, only one pole $\xi=-1$ of order $m-(k_{1}-v_{1})-(k_{2}-v_{2})$ is in the region $\left\{\xi: |\xi|<2\right\}$. Then Cauchy's residue theorem implies that
$$
\frac{1}{2\pi i}\int_{C_{3}}\frac{\xi^{m/2-(k_{2}-v_{2})-1}}{(\xi+1)^{m-(k_{1}-v_{1})-(k_{2}-v_{2})}}\text{d}\xi=\mathop{\text{Res}}_{\xi=-1}\frac{\xi^{m/2-(k_{2}-v_{2})-1}}{(\xi+1)^{m-(k_{1}-v_{1})-(k_{2}-v_{2})}}=0.
$$

When $k_{1}-v_{1}< \frac{m}{2}$ and $k_{2}-v_{2}\ge \frac{m}{2}$, we divide this case into two subcases, namely
$$
\begin{cases}
&k_{2}\ge v_{2}+\frac{m}{2}+\left(v_{1}+\frac{m}{2}-k_{1}\right)\\
&k_{1}<v_{1}+ \frac{m}{2},
\end{cases}
\
\begin{cases}
&v_{2}+\frac{m}{2}\leq k_{2}<v_{2}+\frac{m}{2}+\left(v_{1}+\frac{m}{2}-k_{1}\right)\\
&k_{1}<v_{1}+ \frac{m}{2}.
\end{cases}
$$
For the first one, we see the integral is equal to
$$
\frac{1}{2\pi i}\int_{C_{3}}\frac{(\xi+1)^{k_{1}+k_{2}-v_{1}-v_{2}-m}}{\xi^{k_{2}-v_{2}-m/2+1}}\text{d}\xi.
$$
Only one pole $\xi=0$ of order $k_{2}-v_{2}-m/2+1$ is in the region $\left\{\xi: |\xi|<2\right\}$. Thus Cauchy's residue theorem implies that
$$
\frac{1}{2\pi i}\int_{C_{3}}\frac{(\xi+1)^{k_{1}+k_{2}-v_{1}-v_{2}-m}}{\xi^{k_{2}-v_{2}-m/2+1}}\text{d}\xi=\mathop{\text{Res}}_{\xi=0}\frac{(\xi+1)^{k_{1}+k_{2}-v_{1}-v_{2}-m}}{\xi^{k_{2}-v_{2}-m/2+1}}=0.
$$

For the second one, the integral is
$$
\frac{1}{2\pi i}\int_{C_{3}}\frac{1}{(\xi+1)^{v_{1}+v_{2}+m-k_{1}-k_{2}}\xi^{k_{2}-v_{2}-m/2+1}}\text{d}\xi,
$$
and two poles $\xi=-1$ and $\xi=0$ are in the region $\left\{\xi: |\xi|<2\right\}$. Surprisingly, in this case, we can obtain
$$
\mathop{\text{Res}}_{\xi=-1}\frac{1}{(\xi+1)^{v_{1}+v_{2}+m-k_{1}-k_{2}}\xi^{k_{2}-v_{2}-m/2+1}}=-\mathop{\text{Res}}_{\xi=0}\frac{1}{(\xi+1)^{v_{1}+v_{2}+m-k_{1}-k_{2}}\xi^{k_{2}-v_{2}-m/2+1}},
$$
thus the integral evaluates to zero.

The last case can be handled similarly to the third one. For simplicity, it is advantageous to interchange the roles of $s_{1}$ and $s_{2}$ (i.e. let $|s_{1}|=2\rho$, $|s_{2}|=\rho$) in this analysis.

Combining the above results, only the case
$$
\begin{cases}
&k_{1}-v_{1}\ge \frac{m}{2}\\
&k_{2}-v_{2}\ge \frac{m}{2}
\end{cases}
$$
remains to be dealt with. The integral contributes $\binom{k_{1}-v_{1}+k_{2}-v_{2}-m}{k_{2}-v_{2}-m/2}$ and makes \eqref{double residue equ 1}
\begin{align}
&(1+o(1))C_{0}Z^{(m)}(0,0)\frac{(\log M)^{v-v_{1}-v_{2}-m}}{m!}\notag\\
&\ \ \ \ \ \ \ \times \sum_{k_{1}\ge v_{1}+m/2}\sum_{k_{2}\ge v_{2}+m/2}\frac{a_{k_{1}}b_{k_{2}}}{(v+k_{1}-v_{1}+k_{2}-v_{2}-m)!}\binom{k_{1}-v_{1}+k_{2}-v_{2}-m}{k_{2}-v_{2}-m/2}\notag\\
=&(1+o(1))C_{0}Z^{(m)}(0,0)\frac{(\log M)^{v-v_{1}-v_{2}-m}}{m!\Gamma(v)}\int_{0}^{1}P^{(v_{1}+m/2)}(x)Q^{(v_{2}+m/2)}(x)(1-x)^{v-1}\text{d}x,\notag
\end{align}
where we have used Lemma A.2 (Page 1225, line 3) in \cite{Xi2018} for the last step. \end{proof}

\section{Proof of the second Proposition}\label{proof of upper bound}

Recalling the definition of $\tau(n;\alpha,\beta)$, we have
$$
\tau(n;\alpha,\beta)=\sum_{n=dl, d\in\mathcal{B}(n)\atop d\leq \sqrt{n}\text{exp}(-\sqrt{\log X})}\alpha^{\omega(d)}\beta^{\omega(l)}+\sum_{n=dl, d\in\mathcal{B}(n)\atop \sqrt{n}\text{exp}(-\sqrt{\log X})<d\leq \sqrt{n}}\alpha^{\omega(d)}\beta^{\omega(l)}.
$$
Then, from the twisted multiplicativity of Kloosterman sums and Weil's bound, it's beneficial to write
$$
R_{2}(X)=R_{21}(X)+R_{22}(X),
$$
corresponding to the two classes of $\tau(n;\alpha,\beta)$ mentioned above.

Firstly, we deal with $R_{22}(X)$. Setting
$$
\tau^{\prime}(n;\alpha,\beta):=\sum_{d\mid n,d\in\mathcal{B}(n)\atop \sqrt{n}\text{exp}(-\sqrt{\log X})<d\leq \sqrt{n}}\alpha^{\omega(d)}\beta^{\omega(l)},
$$
one can check that
$$
\tau^{\prime}(n;\alpha,\beta)\leq \sum_{d\mid n\atop \sqrt{n}\text{exp}(-\sqrt{\log X})<d\leq \sqrt{n}}\alpha^{\omega(d)}\beta^{\omega(l)}.
$$
We apply Weil's bound of Kloosterman sums in $R_{22}(X)$. Therefore, our goal has become to deal with
\begin{align}\label{R32 upper bound}
R_{22}(X)\leq \sum_{\left(n,\Pi_{\varepsilon}\right)=1}g\left(\frac{n}{X}\right)\mu^{2}(n)\sum_{n=dl\atop \sqrt{n}\text{exp}(-\sqrt{\log X})<d\leq \sqrt{n}}\alpha^{\omega(2d)}\beta^{\omega(2l)}\left(\sum_{d\mid n}\lambda_{d}\right)^2.
\end{align}
The upper bound in \eqref{R32 upper bound} is the same as $R_{2}^{\sharp}(X)$ in Section 3 of reference \cite{Xi2022}, so we can directly obtain
$$
R_{22}(X)\leq X(\log X)^{-\frac{5}{4}}=o\left(\frac{X}{\log X}\right).
$$

Next we deal with $R_{31}(X)$. For $d\in\mathcal{B}(n)$, we know
$$
R_{21}(X)\leq\mathop{\sum\sum}_{m<n\atop{\omega(m)=3, \omega(n)\ge 3\atop \left(m,\Pi_{\varepsilon}\right)=1}}g\left(\frac{mn}{X}\right)\left|\text{Kl}(\overline{n}^2;m)\right|\mu^2(mn)\alpha^{\omega(m)}(2\beta)^{\omega(n)}\left(\sum_{d\mid mn}\lambda_{d}\right)^2,
$$
where we used Weil's bound of Kloosterman sums and released the condition $\left(n,\Pi_{\varepsilon}\right)=1$.

Now, we follow the process in Section 4 of \cite{Xi2018}. Firstly, we separate variables into small intervals by introducing a series of smooth functions. Setting
$$
\xi=1+(\log X)^{-B}
$$
for some parameter $B\ge 1$. Let $M$, $N$ be some parameters of the form
$$
M=\xi^{l_{1}},\ N=\xi^{l_{2}}
$$
for some $l_{1}, l_{2}\in \mathbb{N}$. Thus, there exists sequences $\left\{b_{l_{1},\xi}\right\}_{l_{1}\ge 0}$, $\left\{b_{l_{2},\xi}\right\}_{l_{2}\ge 0}$ of smooth functions supported in $[\xi^{l_{1}-1},\xi^{l_{1}+1}]$, $[\xi^{l_{2}-1},\xi^{l_{2}+1}]$, respectively. And if we put
$$
U(x)=U_{l_{1}}(x)=b_{l_{1},\xi}(x),\ V(x)=V_{l_{2}}(x)=b_{l_{2},\xi}(x),
$$
then $U$ and $V$ satisfy
$$
\sum_{l_{1}\ge 0}U_{l_{1}}(x)=1, x\ge 1,
$$
$$
\sum_{l_{2}\ge 0}V_{l_{2}}(x)=1, x\ge 1.
$$
Also the derivatives of $U$ and $V$ satisfy
$$
x^lU^{(l)}(x),x^lV^{(l)}(x)\ll (\log X)^{Bl}
$$
for all $l\ge 0$. These can be guaranteed by Lemma 3.2 in \cite{Xi2018}.

Now, for $\textbf{H}=(M,N)$, $MN\sim X$, and $M\ll N$, we write
$$
R_{21}(X)\leq \sum_{\textbf{H}}R_{21}(X,\textbf{H}).
$$
Furthermore, we focus on the inner sum
$$
R_{21}(X,\textbf{H})=\sum_{l}\sum_{m\equiv 0(l)\atop \omega(m)=3, \left(m,\Pi_{\varepsilon}\right)=1}U(m)\mu^2(m)\alpha^{\omega(m)}\Psi(N;m,l),
$$
where
$$
\Psi(N;m,l):=\sum_{(dn,m)=1\atop dn>m,\omega(dn)\ge 3}V(dn)g\left(\frac{mdn}{X}\right)\left|\text{Kl}(\overline{dn}^2;m)\right|\mu^{2}(dn)(2\beta)^{\omega(dn)}\xi_{dl},
$$
with
$$
\xi_{n}:=\sum_{n=[d_{1},d_{2}]}\lambda_{d_{1}}\lambda_{d_{2}}.
$$

Set
$$
\Psi(N;m,l)=\Psi^{*}(N;m,l)-\Psi^{\sharp}(N;m,l),
$$
where
$$
\Psi^{*}(N;m,l)=\sum_{(dn,m)=1\atop dn>m}V(dn)g\left(\frac{mdn}{X}\right)\left|\text{Kl}(\overline{dn}^2;m)\right|\mu^{2}(dn)(2\beta)^{\omega(dn)}\xi_{dl},
$$
\begin{align}
\Psi^{\sharp}(N;m,l):=&\sum_{c_{1}+c_{2}<3}\sum_{(dn,m)=1\atop dn>m}V(dn)g\left(\frac{mdn}{X}\right)\left|\text{Kl}(\overline{dn}^2;m)\right|\mu^{2}(dn)(2\beta)^{\omega(dn)}\notag\\
&\times \mathbbm{1}_{\omega(d)=c_{1}}\mathbbm{1}_{\omega(n)=c_{2}}\xi_{dl}\notag,
\end{align}
with
$$
\mathbbm{1}_{\omega(q)=c}:=
\begin{cases}
&1, \ \text{if }\omega(q)=c,\\
&0, \ \text{if }\omega(q)\neq c.
\end{cases}
$$

We then apply Lemma \ref{lem: BDH type for |KL|} and \ref{lem: equidistribution of KL} to handle $R_{21}(X,\textbf{H})$. For further details, interested readers may refer to the treatment of $R_{2}^{\flat}(X)$ on Page 563 in \cite{Xi2022}.

For $\Psi^{*}(N;m,l)$, essentially, by invoking Lemma \ref{lem: BDH type for |KL|} with the assignments? $\boldsymbol{\alpha}=(2\beta)^{\omega(d)}$, $\boldsymbol{\beta}=(2\beta)^{\omega(n)}$, $\boldsymbol{\gamma}=\xi_{dl}$ ?and observing that the divisor function satisfies the ``Siegel-Walfisz" condition, we derive the contribution of? $\Psi^{*}(N;m,l)$ to $R_{21}(X,\textbf{H})$ as
$$
\sum_{l}\sum_{m\equiv 0(l)\atop \omega(m)=3,\left(m,\Pi_{\varepsilon}\right)=1}U(m)\mu^2(m)\alpha^{\omega(m)}\Psi^{*}(N;m,l),
$$
where
\begin{align}
\Psi^{*}(N;m,l)=&\mathop{{\sum}^*}_{a(m)}\left|\text{Kl}(\overline{a}^2;m)\right|\sum_{dn\equiv a(m)\atop dn>m}V(dn)g\left(\frac{mdn}{X}\right)\mu^{2}(dn)(2\beta)^{\omega(dn)}\xi_{dl}\notag\\
=&(1+o(1))\frac{1}{\phi(m)}\mathop{{\sum}^*}_{a(m)}\left|\text{Kl}(\overline{a}^2;m)\right|\sum_{(dn,m)=1\atop dn>m}V(dn)g\left(\frac{mdn}{X}\right)\mu^{2}(dn)(2\beta)^{\omega(dn)}\xi_{dl}.\notag
\end{align}
Recall that there are no small prime factors in the variable $m$. Now, we use Lemma \ref{lem: equidistribution of KL} for the term $\displaystyle\mathop{{\sum}^*}_{a(m)}\left|\text{Kl}(\overline{a}^2;m)\right|$ and obtain
$$
\mathop{{\sum}^*}_{a(m)}\left|\text{Kl}(\overline{a}^2;m)\right|=\left(\frac{8}{3\pi}\right)^{\omega(m)}\phi(m)\prod_{p\mid m}(1+O(p^{-1/2}\log p))=(1+o(1))\left(\frac{8}{3\pi}\right)^{\omega(m)}\phi(m).
$$
The second equality holds because the prime factor $p$ is relatively large at this time, and their contribution is at most only $o\left(\frac{X}{\log X}\right)$.
Therefore, we may derive $R_{21}(X,\textbf{H})$ with $\Psi^{*}(N;m,l)$ is bounded by
\begin{align}\label{contri. for Psi*}
(1+o(1))\mathop{\sum\sum}_{m<n}U(m)V(n)g\left(\frac{mn}{X}\right)\mu^{2}(mn)\left(\frac{8\alpha}{3\pi}\right)^{\omega(m)}(2\beta)^{\omega(n)}\left(\sum_{d\mid mn}\lambda_{d}\right)^2.
\end{align}

For $\Psi^{\sharp}(N;m,l)$, we see that there are at most two cases to deal with. Let
$$
\Psi^{\sharp}(N;m,l)=\Psi^{\sharp}_{1}(N;m,l)+\Psi^{\sharp}_{2}(N;m,l),
$$
where
\begin{align}
\Psi^{\sharp}_{1}(N;m,l):=&\sum_{c_{1}+c_{2}=1}\sum_{(dn,m)=1\atop dn>m}V(dn)g\left(\frac{mdn}{X}\right)\left|\text{Kl}(\overline{dn}^2;m)\right|\mu^{2}(dn)(2\beta)^{\omega(dn)}\notag\\
&\times \mathbbm{1}_{\omega(d)=c_{1}}\mathbbm{1}_{\omega(n)=c_{2}}\xi_{dl},\notag
\end{align}
\begin{align}
\Psi^{\sharp}_{2}(N;m,l):=&\sum_{c_{1}+c_{2}=2}\sum_{(dn,m)=1\atop dn>m}V(dn)g\left(\frac{mdn}{X}\right)\left|\text{Kl}(\overline{dn}^2;m)\right|\mu^{2}(dn)(2\beta)^{\omega(dn)}\notag\\
&\times \mathbbm{1}_{\omega(d)=c_{1}}\mathbbm{1}_{\omega(n)=c_{2}}\xi_{dl},\notag
\end{align}
respectively.

Thus, similar to the proof of $\Psi^{*}(N;m,l)$, we only need to replace the corresponding sequence in Lemma \ref{lem: BDH type for |KL|} with
$$
\boldsymbol{\alpha}=(2\beta)^{\omega(d)}\mathbbm{1}_{\omega(d)=1},\ \boldsymbol{\beta}=(2\beta)^{\omega(n)}\mathbbm{1}_{\omega(n)=1}\text{ or }(2\beta)^{\omega(n)}\mathbbm{1}_{\omega(n)=0},\ \boldsymbol{\gamma}=\xi_{dl}.
$$
At this point, $\boldsymbol{\alpha}$ is actually a prime sequence, so it still satisfies the ``Siegel-Walfisz" condition through the application of Dirichlet's prime theorem.

When applying Lemma \ref{lem: BDH type for |KL|}, the condition ``$M\ge N$" requires our attention. In $\Psi^{*}(N;m,l)$ and $\Psi^{\sharp}(N;m,l)$, if the variable $d$ is less than $n$, we can directly apply Lemma \ref{lem: BDH type for |KL|}. When the variable $d$ is larger than $n$, things will become slightly more complicated. In this case, we need to separate the variable $dl$ in $\xi_{dl}$ into independent variables to ensure that we can continue using Lemma \ref{lem: BDH type for |KL|}.

According to the definition of $\xi_{dl}$, we have
\begin{align}
\xi_{dl}=&\sum_{dl=[d_{1},d_{2}]}\lambda_{d_{1}}\lambda_{d_{2}}\notag\\
=&\sum_{dl=ke_{1}e_{2}}\lambda_{ke_{1}}\lambda_{ke_{2}}\notag\\
=&\sum_{dl=ke_{1}e_{2}}\mu(ke_{1})\mu(ke_{2})G(\log k, \log e_{1}, \log e_{2}),\notag
\end{align}
where $k=(d_{1},d_{2})$, $G$ is a polynomial (of $\log$) with the form
$$
\sum_{a,b_{1},b_{2}\ge 0}C_{a,b_{1},b_{2}}(\log k)^a(\log e_{1})^{b_{1}}(\log e_{2})^{b_{2}}.
$$
And $C_{a,b_{1},b_{2}}$ here is a constant coming from $F$ defined in Section \ref{sub: Pre.}. Recalling that all integers above are square-free numbers, thus we have decomposition
$$
d=k^{\prime}e_{1}^{\prime}e_{2}^{\prime}, \ l=k^{\prime\prime}e_{1}^{\prime\prime}e_{2}^{\prime\prime} \ \text{by writing} \ k=k^{\prime}k^{\prime\prime},\ e_{1}=e_{1}^{\prime}e_{1}^{\prime\prime},\ e_{2}=e_{2}^{\prime}e_{2}^{\prime\prime}.
$$
Therefore, in $\Psi^{*}(N;m,l)$ or $\Psi^{\sharp}(N;m,l)$, we transform the sum into
$$
\sum_{d,n}\sum_{l\mid m}\text{(The summation term)}=\sum_{n,k^{\prime},k^{\prime\prime},e_{1}^{\prime},e_{1}^{\prime\prime},e_{2}^{\prime},e_{2}^{\prime\prime}}\ \ \sum_{(k^{\prime\prime}e_{1}^{\prime\prime}e_{2}^{\prime\prime})\mid m}\text{(The summation term)}.
$$
Consequently, $\xi_{dl}$ admits a multiplicative decomposition into factors depending solely on $d$ and $l$ respectively. At most, there will be an additional $(\log X)^{C}$ factor for some constant $C>0$, which comes from the polynomial $G$ and will contribute negligibly. This ensures that we can always use Lemma \ref{lem: BDH type for |KL|}.

Combining with \eqref{contri. for Psi*} and taking into account all admissible tuples $\textbf{H}$, we get
$$
R_{21}(X)\leq(1+o(1))\sum_{n}g\left(\frac{n}{X}\right)\mu^{2}(n)\tau\left(n;\frac{8\alpha}{3\pi},2\beta\right)\left(\sum_{d\mid n}\lambda_{d}\right)^2.
$$
From the definition of $\tau(n,X)$, we have
\begin{align}\label{ratio in upper bound}
\tau\left(n;\frac{8\alpha}{3\pi},2\beta\right)= &\left(\frac{8\alpha}{3\pi}\right)^3(2\beta)^{\omega(n)-3}N(n)\notag\\
=& \left(\frac{4\alpha}{3\pi\beta}\right)^3(4\beta)^{\omega(n)}\frac{N(n)}{2^{\omega(n)}}\notag\\
\leq & \left(\frac{4\alpha}{3\pi\beta}\right)^3(4\beta)^{\omega(n)}\frac{\binom{\omega(n)}{3}}{2^{\omega(n)}},
\end{align}
where
$$
N(n)=\#\left\{\mathcal{B}(n)\right\}.
$$
Recalling that
$$
\alpha>0,\ \beta>0,
$$
now we specifically take
\begin{align}\label{value of alpha and beta}
\alpha=\alpha_{0}>0, \ \beta=1.
\end{align}

The term
$$
\frac{\binom{\omega(n)}{3}}{2^{\omega(n)}}
$$
on the right side of \eqref{ratio in upper bound} is decreasing for $\omega(n)\ge 6$, which leads to
$$
\tau\left(n;\frac{8\alpha}{3\pi},2\beta\right)\leq \frac{5}{16}\left(\frac{4\alpha_{0}}{3\pi}\right)^34^{\omega(n)}.
$$
Then we have
$$
R_{2}(X)\leq\frac{5}{16}\left(\frac{4\alpha_{0}}{3\pi}\right)^3\sum_{\omega(n)\ge 6}g\left(\frac{n}{X}\right)\mu^{2}(n)4^{\omega(n)}\left(\sum_{d\mid n}\lambda_{d}\right)^2.
$$

Put
$$
F(x)=x^4(a_{0}+a_{1}x+a_{2}x^2+a_{3}x^3+a_{4}x^4).
$$
Similar with section 6 in \cite{Xi2018}, we shall minimize the proportion $c(k,F)/A_{2}(F)$. With the help of Mathematical software\footnote{The code of the programs used can be found at \url{https://github.com/MXZhong29/MXZhong.Github.io.git} or requested from authors.}, we may choose
$$
(a_{0},a_{1},a_{2},a_{3},a_{4})\approx \left(\frac{100396}{53901},-\frac{17284}{13475},\frac{76486}{134753},-\frac{33241}{188654},\frac{10836}{377308}\right).
$$
Now we write
\begin{align}
&\sum_{\omega(n)\ge 6}g\left(\frac{n}{X}\right)\mu^{2}(n)4^{\omega(n)}\left(\sum_{d\mid n}\lambda_{d}\right)^2\notag\\
=&\sum_{n}g\left(\frac{n}{X}\right)\mu^{2}(n)4^{\omega(n)}\left(\sum_{d\mid n}\lambda_{d}\right)^2\notag\\
&-\sum_{i=1}^{5}\sum_{\omega(n)=i}g\left(\frac{n}{X}\right)\mu^{2}(n)4^{\omega(n)}\left(\sum_{d\mid n}\lambda_{d}\right)^2\notag\\
=&4\tilde{g}(1)c(4,F)\frac{X}{\log X}(1+o(1))\notag\\
&-\sum_{i=1}^{5}4^i\sum_{\omega(n)=i}g\left(\frac{n}{X}\right)\mu^{2}(n)\left(\sum_{d\mid n}\lambda_{d}\right)^2,\notag
\end{align}
through the application of Lemma \ref{lem: Selberg sieve for divisor function}.

Furthermore, by using prime number theorem (or see \cite{Matomaki2011}, Page 293), we get
\begin{align}
&\sum_{\omega(n)=i}g\left(\frac{n}{X}\right)\mu^{2}(n)\left(\sum_{d\mid n}\lambda_{d}\right)^2\notag\\
=&(1+o(1))B_{i}(F)\widetilde{g}(1)\frac{X}{\log X},\notag
\end{align}
where $B_{1}(F)=1$, and for $i\ge 2$,
$$
B_{i}(F)=\mathop{\int\ldots\int}_{T_{i}}\frac{L_{i}^2\left(F;X^{1-\alpha_{2}-\cdots-\alpha_{i}},X^{\alpha_{2}},\ldots,X^{\alpha_{i}}\right)}{\alpha_{2}\cdots\alpha_{i}(1-\alpha_{2}-\cdots-\alpha_{i})}\text{d}\alpha_{2}\cdots\text{d}\alpha_{i},
$$
$$
L_{i}(F;\alpha_{1},\alpha_{2},\ldots,\alpha_{i})=\sum_{\mathcal{A}\subseteq \left\{\alpha_{1},\alpha_{2},\ldots,\alpha_{i}\right\}\atop \sum_{\alpha\in\mathcal{A}}\alpha<\frac{1}{4}}(-1)^{|\mathcal{A}|}F\left(1-4\sum_{\alpha\in\mathcal{A}}\alpha\right),
$$
with $T_{i}:=\left\{(\alpha_{2},\ldots, \alpha_{i})\in[10^{-2025},1)^{i-1}:\alpha_{i}<\cdots<\alpha_{2}<1-\alpha_{2}-\cdots-\alpha_{i}\right\}$.
Then
\begin{align}
&\sum_{\omega(n)\ge 6}g\left(\frac{n}{X}\right)\mu^{2}(n)4^{\omega(n)}\left(\sum_{d\mid n}\lambda_{d}\right)^2\notag\\
=&(1+o(1))\widetilde{g}(1)\left(4c(4,F)-\sum_{i=1}^{5}4^iB_{i}(F)\right)\frac{X}{\log X}.\notag
\end{align}

Recalling the value of $F(x)$, with the help of mathematical software, we obtain
$$
R_{2}(X)\leq (1+o(1))6.27044\alpha_{0}^3\times\widetilde{g}(1)\frac{X}{\log X}.
$$

\section{Proof of the first Proposition}

We divide $R_{1}(X)$ into two parts, namely
$$
R_{1}(X)=R_{11}(X)-R_{12}(X),
$$
where
$$
R_{11}(X)=\sum_{n}g\left(\frac{n}{X}\right)\mu^{2}(n)\left(\left|\text{Kl}(1;n)\right|\pm\text{Kl}(1;n)\right)\left(\sum_{d\mid n}\lambda_{d}\right)^2,
$$
$$
R_{12}(X)=\sum_{\left(n,\Pi_{\varepsilon}\right)>1}g\left(\frac{n}{X}\right)\mu^{2}(n)\left(\left|\text{Kl}(1;n)\right|\pm\text{Kl}(1;n)\right)\left(\sum_{d\mid n}\lambda_{d}\right)^2.
$$

Lemma \ref{lem: trace formula for |KL|} tells us that
$$
R_{11}(X)= \sum_{n}g\left(\frac{n}{X}\right)\mu^{2}(n)\left|\text{Kl}(1;n)\right|\left(\sum_{d\mid n}\lambda_{d}\right)^2+o\left(\frac{X}{\log X}\right).
$$
Interested readers may see Section 6 in \cite{Xi2015} for the details, where the Kloosterman sums' spectral theory was used.

Recall the value of $F(x)$ again. Then from Lemma \ref{lem: equidistribution for |KL|} we have
$$
R_{11}(X)\ge (1+o(1))0.76235\times\widetilde{g}(1)\frac{X}{\log X}.
$$

Now we turn to handle $R_{12}(X)$. One may notice that $R_{12}(X)$ here is the same as $R_{12}(X)$ in \cite{Xi2022}, but we will not directly cite this result. We use a similar approach to handle $R_{12}(X)$, which is more direct compared to the method in \cite{Xi2022}. The approach we use here can avoid using Cauchy's inequality and avoid introducing a new smooth function $V(X)$.

Naturally, we have
$$
R_{12}(X)\leq 2\sum_{\left(n,\Pi_{\varepsilon}\right)>1}g\left(\frac{n}{X}\right)\mu^{2}(n)2^{\omega(n)}\left(\sum_{d\mid n}\lambda_{d}\right)^2,
$$
where we used the Weil's bound for Kloosterman sums. Furthermore, we can write
\begin{align}\label{R12 upper 1}
R_{12}(X)\leq &2\sum_{p<X^{\varepsilon}}\sum_{n\equiv 0(p)}g\left(\frac{n}{X}\right)\mu^{2}(n)2^{\omega(n)}\left(\sum_{d\mid n}\lambda_{d}\right)^2\notag\\
=&4\sum_{p<X^{\varepsilon}}\sum_{(n,p)=1}g\left(\frac{np}{X}\right)\mu^{2}(n)2^{\omega(n)}\left(\sum_{d\mid np}\lambda_{d}\right)^2.
\end{align}

Generally, for the inner sum, we will consider the following form of sum
$$
T(X):=\sum_{(n,\Pi_{l})=1}g\left(\frac{n\Pi_{l}}{X}\right)\mu^{2}(n)\kappa^{\omega(n)}\left(\sum_{d\mid n\Pi_{l}}\lambda_{d}\right)^2,
$$
where we set
$$\Pi_{l}=\prod_{1\leq i\leq l} p_{i}.$$
The estimation of $T(X)$ is actually similar to Section 5 in \cite{Xi2018}. However, for completeness, we provide the details of the proof below.

\subsection{Separation of variables and establishment of integrals}\label{upper bound for P step 1}

We write $T(X)$ as
\begin{align}\label{P step 1}
&\sum_{(n,\Pi_{l})=1}g\left(\frac{n\Pi_{l}}{X}\right)\mu^{2}(n)\kappa^{\omega(n)}\left(\sum_{d\mid n\Pi_{l}}\lambda_{d}\right)^2\notag\\
=&\sum_{d}\xi_{d}\sum_{n\Pi_{l}\equiv 0(d)\atop (n,\Pi_{l})=1}g\left(\frac{n\Pi_{l}}{X}\right)\mu^{2}(n)\kappa^{\omega(n)}\notag\\
=&\sum_{d}\xi_{d}\sum_{dk\equiv 0(\Pi_{l})\atop (d,k)=1,(dk/\Pi_{l},\Pi_{l})=1}g\left(\frac{dk}{X}\right)\mu^{2}\left(\frac{dk}{\Pi_{l}}\right)\kappa^{\omega\left(\frac{dk}{\Pi_{l}}\right)}\notag\\
=&\sum_{\Pi_{l}=\Delta_{1}\Delta_{2}}\sum_{d\equiv 0(\Delta_{1})\atop(d/\Delta_{1},\Pi_{l})=1}\xi_{d}\sum_{k\equiv 0(\Delta_{2})\atop (k,d)=1,(k/\Delta_{2},\Pi_{l})=1}g\left(\frac{dk}{X}\right)\mu^{2}\left(\frac{dk}{\Delta_{1}\Delta_{2}}\right)\kappa^{\omega\left(\frac{dk}{\Delta_{1}\Delta_{2}}\right)}\notag\\
=&\sum_{\Pi_{l}=\Delta_{1}\Delta_{2}}\sum_{d\equiv 0(\Delta_{1})\atop(d/\Delta_{1},\Pi_{l})=1}\xi_{d}\sum_{(k,d)=1\atop(k,\Pi_{l})=1}g\left(\frac{dk\Delta_{2}}{X}\right)\mu^2\left(\frac{d}{\Delta_{1}}k\right)\kappa^{\omega\left(dk/\Delta_{1}\right)}.
\end{align}

We state the inner sum in \eqref{P step 1} as
\begin{align}
\frac{1}{2\pi i}\int_{(2)}\tilde{g}(s)\left(\sum_{(k,(d/\Delta_{1})\Pi_{l})=1}\frac{\mu^2(dk/\Delta_{1})\kappa^{\omega(dk/\Delta_{1})}}{d^sk^s\Delta_{2}^s}\right)X^s\text{d}s,\notag
\end{align}
where the Mellin inverse transform of $\tilde{g}$ is used. Substituting the above into \eqref{P step 1} yields
\begin{align}\label{P step 2}
\frac{X}{2\pi i}\int_{(1)}\tilde{g}(s+1)\left(\sum_{\Pi_{l}=\Delta_{1}\Delta_{2}}\frac{1}{\Delta_{2}^{s+1}}\sum_{d\equiv 0(\Delta_{1})\atop (d/\Delta_{1},\Pi_{l})=1}\frac{\xi_{d}\mu^2(d/\Delta_{1})\kappa^{\omega(d/\Delta_{1})}}{d^{s+1}}\right)\left(\sum_{(k,d\Pi_{l}/\Delta_{1})=1}\frac{\mu^2(k)\kappa^{\omega(k)}}{k^{s+1}}\right)X^s\text{d}s.
\end{align}

Furthermore, using the Euler product formula for the Dirichlet series in the third bracket, for $\Re s>0$, we derive
\begin{align}
&\sum_{(k,d\Pi_{l}/\Delta_{1})=1}\frac{\mu^2(k)\kappa^{\omega(k)}}{k^{s+1}}=\prod_{p\nmid d\Pi_{l}/\Delta_{1}}\left(1+\frac{\kappa}{p^{s+1}}\right)\notag\\
=&\prod_{p}\left(1+\frac{\kappa}{p^{s+1}}\right)\left(1-\frac{1}{p^{s+1}}\right)^\kappa\zeta^{\kappa}(s+1)\prod_{p\mid d\Pi_{l}/\Delta_{1}}\left(1+\frac{\kappa}{p^{s+1}}\right)^{-1}.\notag
\end{align}
Let
$$
G(s+1):=\prod_{p}\left(1+\frac{\kappa}{p^{s+1}}\right)\left(1-\frac{1}{p^{s+1}}\right)^\kappa,
$$
thus we have
\begin{align}\label{P step 3}
\frac{X}{2\pi i}\int_{(1)}\tilde{g}(s+1)G(s+1)\zeta^{\kappa}(s+1)N(s)X^s\text{d}s,
\end{align}
where
\begin{align}
N(s):=\sum_{\Pi_{l}=\Delta_{1}\Delta_{2}}\frac{1}{\Delta_{2}^{s+1}}\prod_{p\mid\Pi_{l}}\left(1+\frac{\kappa}{p^{s+1}}\right)^{-1}\sum_{d\equiv 0(\Delta_{1})\atop (d/\Delta_{1},\Pi_{l})=1}\frac{\xi_{d}\mu^2(d/\Delta_{1})\kappa^{\omega(d/\Delta_{1})}}{d^{s+1}}\prod_{p\mid d/\Delta_{1}}\left(1+\frac{\kappa}{p^{s+1}}\right)^{-1}.\notag
\end{align}

\subsection{Analysis of $N(s)$}\label{upper bound for P step 2}

Obviously, we have
\begin{align}
N(s)=\sum_{\Pi_{l}=\Delta_{1}\Delta_{2}}\frac{1}{\Delta_{1}^{s+1}\Delta_{2}^{s+1}}\prod_{p\mid\Pi_{l}}\left(1+\frac{\kappa}{p^{s+1}}\right)^{-1}\sum_{(d,\Pi_{l})=1}\frac{\xi_{d\Delta_{1}}\mu^2(d)\kappa^{\omega(d)}}{d^{s+1}}\prod_{p\mid d}\left(1+\frac{\kappa}{p^{s+1}}\right)^{-1}.\notag
\end{align}
Define
$$
\beta(d,s):=\frac{d^{s+1}}{\mu^2(d)\kappa^{\omega(d)}}\prod_{p\mid d}\left(1+\frac{\kappa}{p^{s+1}}\right).
$$
Then from the definition of $\xi_{d}$, we get
\begin{align}
\sum_{(d,\Pi_{l})=1}\frac{\xi_{d\Delta_{1}}}{\beta(d,s)}=\sum_{(d,\Pi_{l})=1}\left(\sum_{d\Delta_{1}=[d_{1},d_{2}]}\frac{\lambda_{d_{1}}\lambda_{d_{2}}}{\beta(d,s)}\right)=\sum_{[d_{1},d_{2}]\equiv0(\Delta_{1})\atop ([d_{1},d_{2}]/\Delta_{1},\Pi_{l})=1}\frac{\lambda_{d_{1}}\lambda_{d_{2}}}{\beta([d_{1},d_{2}]/\Delta_{1},s)}.\notag
\end{align}

Since $d_{i}(i=1,2)$ are square-free integers, let $(d_{1},d_{2})=m$, $d_{1}=d_{1}^{\prime}m$ and $d_{2}=d_{2}^{\prime}m$; then we know $m$, $d_{1}^{\prime}$ and $d_{2}^{\prime}$ are pairwise coprime (for convenience, we will still use the symbol $d_{1}$, $d_{2}$ in the following text). Now we can write
\begin{align}
N(s)=\sum_{\Delta_{1}\mid \Pi_{l}}\frac{1}{\Pi_{l}^{s+1}}\prod_{p\mid\Pi_{l}}\left(1+\frac{\kappa}{p^{s+1}}\right)^{-1}\sum_{m\leq \sqrt{D}}\sum_{md_{1}d_{2}\equiv0(\Delta_{1})\atop (d_{1},d_{2})=1, (md_{1}d_{2}/\Delta_{1},\Pi_{l})=1}\frac{\lambda_{md_{1}}\lambda_{md_{2}}}{\beta(md_{1}d_{2}/\Delta_{1},s)}.\notag
\end{align}
It is beneficial to separate $\Delta_{1}$ into $\Delta_{1}^{\prime}\Delta_{2}^{\prime}\Delta_{3}$ satisfying $\Delta_{1}^{\prime}\mid m$, $\Delta_{2}^{\prime}\mid d_{1}$ and $\Delta_{3}\mid d_{2}$ (in the following text, we still use the symbol $\Delta_{1}$ and $\Delta_{2}$), which leads to
\begin{align}\label{P step 4}
N(s)=&\sum_{\Delta_{1}\Delta_{2}\Delta_{3}\mid \Pi_{l}}\frac{1}{\Pi_{l}^{s+1}}\prod_{p\mid\Pi_{l}}\left(1+\frac{\kappa}{p^{s+1}}\right)^{-1}\sum_{m\leq \sqrt{D}}\sum_{m\equiv0(\Delta_{1})\atop {d_{1}\equiv0(\Delta_{2})\atop {d_{2}\equiv0(\Delta_{3})\atop (d_{1},d_{2})=1,\left(\frac{m}{\Delta_{1}}\frac{d_{1}}{\Delta_{2}}\frac{d_{2}}{\Delta_{3}},\Pi_{l}\right)=1}}}\frac{\lambda_{md_{1}}\lambda_{md_{2}}}{\beta\left(\frac{m}{\Delta_{1}}\frac{d_{1}}{\Delta_{2}}\frac{d_{2}}{\Delta_{3}},s\right)}\notag\\
=&\sum_{\Delta_{1}\Delta_{2}\Delta_{3}\mid \Pi_{l}}\frac{1}{\Pi_{l}^{s+1}}\prod_{p\mid\Pi_{l}}\left(1+\frac{\kappa}{p^{s+1}}\right)^{-1}\sum_{m\leq\sqrt{D}\atop {m\equiv0(\Delta_{1})\atop (m/\Delta_{1},\Pi_{l})=1}}\frac{1}{\beta(m/\Delta_{1},s)}\notag\\
&\times\sum_{ d_{1}\equiv0(\Delta_{2})\atop (d_{1}/\Delta_{2},\Pi_{l})=1}\frac{\lambda_{md_{1}}}{\beta(d_{1}/\Delta_{2},s)}\sum_{ d_{2}\equiv0(\Delta_{3})\atop {(d_{1},d_{2})=1\atop (d_{2}/\Delta_{3},\Pi_{l})=1}}\frac{\lambda_{md_{2}}}{\beta(d_{2}/\Delta_{3},s)}.
\end{align}

Firstly, we deal with the inner sum in \eqref{P step 4}. From the definition of $\lambda_{d}$, we have
\begin{align}
&\sum_{ d_{2}\equiv0(\Delta_{3})\atop {(d_{1},d_{2})=1\atop (d_{2}/\Delta_{3},\Pi_{l})=1}}\frac{\lambda_{md_{2}}}{\beta(d_{2}/\Delta_{3},s)}=\sum_{(d_{1},d_{2})=1\atop(d_{2},\Pi_{l})=1}\frac{\lambda_{md_{2}\Delta_{3}}}{\beta(d_{2},s)}\notag\\
=&\mu(m\Delta_{3})\sum_{\left(d_{2},\frac{m}{\Delta_{1}}\frac{d_{1}}{\Delta_{2}}\Pi_{l}\right)=1}\frac{\mu(d_{2})}{\beta(d_{2},s)}F\left(\frac{\log(\sqrt{D}/(md_{2}\Delta_{3}))}{\log \sqrt{D}}\right).\notag
\end{align}
We replace variable $d_{2}$ with $d_{2}\Delta_{3}$ (we still use the symbol $d_{2}$) here. From \eqref{P step 4}, one finds the restrictions $d_{2}\equiv0(\Delta_{3})$ and $(d_{1},d_{2})=1$, which yields $(d_{1},\Delta_{3})=1$. We released condition $(d_{1},\Delta_{3})=1$, which is included in condition $(d_{1}/\Delta_{2},\Pi_{l})=1$ because the numbers discussed here are all square-free numbers. The same operations in the following text will not be explained again.

Setting
$$
S(x):=F\left(x\cdot\frac{\log(\sqrt{D}/(m\Delta_{3}))}{\log \sqrt{D}}\right),
$$
and applying Lemma \ref{lem: Mellin trans for F} to $S(x)$ (or see Page 1218 in \cite{Xi2018}), we obtain
$$
\frac{\mu(m\Delta_{3})}{2\pi i}\int_{(2)}\check{S}_{\sqrt{D}/(m\Delta_{3})}(t)\left(\sum_{\left(d_{2},\frac{m}{\Delta_{1}}\frac{d_{1}}{\Delta_{2}}\Pi_{l}\right)=1}\frac{\mu(d_{2})}{\beta(d_{2},s)d_{2}^t}\right)\frac{(\sqrt{D}/(m\Delta_{3}))^t}{t}\text{d}t.
$$
Noting that
$$
\check{S}_{\sqrt{D}/(m\Delta_{3})}(t)=\check{F}_{\sqrt{D}}(t),
$$
thus we have
$$
\frac{\mu(m\Delta_{3})}{2\pi i}\int_{(2)}\check{F}_{\sqrt{D}}(t)\left(\sum_{\left(d_{2},\frac{m}{\Delta_{1}}\frac{d_{1}}{\Delta_{2}}\Pi_{l}\right)=1}\frac{\mu(d_{2})}{\beta(d_{2},s)d_{2}^t}\right)\frac{(\sqrt{D}/(m\Delta_{3}))^t}{t}\text{d}t,
$$
and we state the sum in the integrand as
\begin{align}
&\prod_{p\nmid \frac{m}{\Delta_{1}}\frac{d_{1}}{\Delta_{2}}\Pi_{l}}\left(1-\frac{1}{\beta(p,s)p^t}\right)\notag\\
=&\prod_{p}\left(1-\frac{1}{\beta(p,s)p^t}\right)\prod_{p\mid \frac{m}{\Delta_{1}}\frac{d_{1}}{\Delta_{2}}\Pi_{l}}\left(1-\frac{1}{\beta(p,s)p^t}\right)^{-1}\notag\\
=&\prod_{p\nmid \Pi_{l}}\left(1-\frac{1}{\beta(p,s)p^t}\right)\prod_{p\mid \frac{m}{\Delta_{1}}\frac{d_{1}}{\Delta_{2}}}\left(1-\frac{1}{\beta(p,s)p^t}\right)^{-1}\notag\\
=&\prod_{p\nmid \Pi_{l}}\left(1-\frac{1}{\beta(p,s)p^t}\right)\prod_{p\mid (m/\Delta_{1})}\left(1-\frac{1}{\beta(p,s)p^t}\right)^{-1}\prod_{p\mid (d_{1}/\Delta_{2})}\left(1-\frac{1}{\beta(p,s)p^t}\right)^{-1}.\notag
\end{align}
Substituting the above into \eqref{P step 4} gives
\begin{align}\label{P step 5}
\frac{1}{2\pi i}\int_{(2)}&\check{F}_{\sqrt{D}}(t_{1})\frac{\sqrt{D}^{t_{1}}}{t_{1}}\left(\sum_{\Delta_{1}\Delta_{2}\Delta_{3}\mid \Pi_{l}}\frac{\prod_{p\mid\Pi_{l}}\left(1+\frac{\kappa}{p^{s+1}}\right)^{-1}}{\Pi_{l}^{s+1}}\frac{\mu(\Delta_{3})}{\Delta_{3}^{t_{1}}}\prod_{p\nmid \Pi_{l}}\left(1-\frac{1}{\beta(p,s)p^{t_{1}}}\right)\right)\notag\\
\times&\left(\sum_{m\leq\sqrt{D}\atop {m\equiv0(\Delta_{1})\atop (m/\Delta_{1},\Pi_{l})=1}}\frac{\mu(m)}{\beta(m/\Delta_{1},s)m^{t_{1}}}\prod_{p\mid (m/\Delta_{1})}\left(1-\frac{1}{\beta(p,s)p^{t_{1}}}\right)^{-1}\right)\notag\\
\times&\left(\sum_{ d_{1}\equiv0(\Delta_{2})\atop (d_{1}/\Delta_{2},\Pi_{l})=1}\frac{\lambda_{md_{1}}}{\beta(d_{1}/\Delta_{2},s)} \prod_{p\mid (d_{1}/\Delta_{2})}\left(1-\frac{1}{\beta(p,s)p^{t_{1}}}\right)^{-1}\right)\text{d}t_{1}.
\end{align}

Next, in much the same way, we handle the inner sum in \eqref{P step 5}, which yields
$$
\frac{\mu(m\Delta_{2})}{2\pi i}\int_{(2)}\check{F}_{\sqrt{D}}(t_{2})\left(\sum_{\left(d_{1},\frac{m}{\Delta_{1}}\Pi_{l}\right)=1}\frac{\mu(d_{1})}{\beta(d_{1},s)d_{1}^{t_{2}}}\prod_{p\mid d_{1}}\left(1-\frac{1}{\beta(p,s)p^{t_{1}}}\right)^{-1} \right)\frac{(\sqrt{D}/(m\Delta_{2}))^{t_{2}}}{t_{2}}\text{d}t_{2}.
$$
The sum in the integrand can be transformed to
$$
\prod_{p\nmid \Pi_{l}}\left(1-\frac{1}{\beta(p,s)p^{t_{2}}\left(1-\frac{1}{\beta(p,s)p^{t_{1}}}\right)}\right)\prod_{p\mid (m/\Delta_{1})}\left(1-\frac{1}{\beta(p,s)p^{t_{2}}\left(1-\frac{1}{\beta(p,s)p^{t_{1}}}\right)}\right)^{-1}.
$$
Thus we obtain that \eqref{P step 5} is equal to
\begin{align}\label{P step 6}
&\frac{1}{(2\pi i)^2}\iint_{(2)}\check{F}_{\sqrt{D}}(t_{1})\check{F}_{\sqrt{D}}(t_{2})\frac{\sqrt{D}^{t_{1}+t_{2}}}{t_{1}+t_{2}}\frac{1}{\Pi_{l}^{s+1}}\prod_{p\mid\Pi_{l}}\left(1+\frac{\kappa}{p^{s+1}}\right)^{-1}\notag\\
&\times\left(\sum_{\Delta_{1}\Delta_{2}\Delta_{3}\mid \Pi_{l}}\frac{\mu(\Delta_{2})\mu(\Delta_{3})}{\Delta_{3}^{t_{1}}\Delta_{2}^{t_{2}}}\prod_{p\nmid \Pi_{l}}\left(1-\frac{1}{\beta(p,s)p^{t_{1}}}-\frac{1}{\beta(p,s)p^{t_{2}}}\right)\right) \notag\\
&\times\left(\sum_{m\leq\sqrt{D}\atop {m\equiv0(\Delta_{1})\atop (m/\Delta_{1},\Pi_{l})=1}}\frac{\mu^2(m)}{\beta(m/\Delta_{1},s)m^{t_{1}+t_{2}}}\prod_{p\mid (m/\Delta_{1})}\left(1-\frac{1}{\beta(p,s)p^{t_{1}}}-\frac{1}{\beta(p,s)p^{t_{2}}}\right)^{-1}\right)\text{d}t_{1}\text{d}t_{2}\notag\\
=&\frac{1}{(2\pi i)^3}\iiint_{(2)}\check{F}_{\sqrt{D}}(t_{1})\check{F}_{\sqrt{D}}(t_{2})\frac{\sqrt{D}^{t_{1}+t_{2}+w}}{t_{1}+t_{2}+w}\frac{1}{\Pi_{l}^{s+1}}\prod_{p\mid\Pi_{l}}\left(1+\frac{\kappa}{p^{s+1}}\right)^{-1}\notag\\
&\times\left(\sum_{\Delta_{1}\Delta_{2}\Delta_{3}\mid \Pi_{l}}\frac{\mu(\Delta_{2})\mu(\Delta_{3})}{\Delta_{3}^{t_{1}}\Delta_{2}^{t_{2}}}\prod_{p\nmid \Pi_{l}}\left(1-\frac{1}{\beta(p,s)p^{t_{1}}}-\frac{1}{\beta(p,s)p^{t_{2}}}\right)\right) \notag\\
&\times\left(\sum_{m\equiv0(\Delta_{1})\atop (m/\Delta_{1},\Pi_{l})=1}\frac{\mu^2(m)}{\beta(m/\Delta_{1},s)m^{t_{1}+t_{2}+w}}\prod_{p\mid (m/\Delta_{1})}\left(1-\frac{1}{\beta(p,s)p^{t_{1}}}-\frac{1}{\beta(p,s)p^{t_{2}}}\right)^{-1}\right)\text{d}t_{1}\text{d}t_{2}\text{d}w,\notag\\
\end{align}
where $\displaystyle\iint_{(2)}$, $\displaystyle\iiint_{(2)}$ represent $\int_{(2)}\int_{(2)}$ and $\int_{(2)}\int_{(2)}\int_{(2)}$, respectively. The third integral comes from the Mellin inverse transform of
$$
\mathbbm{1}_{m\leq\sqrt{D}}=
\begin{cases}
&1, \ \text{if }m\leq\sqrt{D},\\
&0, \ \text{if }m>\sqrt{D}.
\end{cases}
$$

We rewrite the summation over $m$ as
$$
\frac{\mu^2(\Pi_{1})}{\Delta_{1}^{t_{1}+t_{2}+w}}\prod_{p\nmid \Pi_{l}}\left(1+\frac{1}{\beta(p,s)p^{t_{1}+t_{2}+w}}\left(1-\frac{1}{\beta(p,s)p^{t_{1}}}-\frac{1}{\beta(p,s)p^{t_{2}}}\right)^{-1}\right),
$$
and this leads to
\begin{align}
N(s)=&\frac{1}{(2\pi i)^3}\iiint_{(2)}\check{F}_{\sqrt{D}}(t_{1})\check{F}_{\sqrt{D}}(t_{2})\frac{\sqrt{D}^{t_{1}+t_{2}+w}}{t_{1}+t_{2}+w}\frac{1}{\Pi_{l}^{s+1}}\left(\sum_{\Delta_{1}\Delta_{2}\Delta_{3}\mid \Pi_{l}}\frac{\mu^2(\Delta_{1})\mu(\Delta_{2})\mu(\Delta_{3})}{\Delta_{1}^{t_{1}+t_{2}+w}\Delta_{2}^{t_{2}}\Delta_{3}^{t_{1}}}\right)\notag\\
&\times\prod_{p\mid\Pi_{l}}\left(1+\frac{\kappa}{p^{s+1}}\right)^{-1}\prod_{p\nmid \Pi_{l}}\left(1-\frac{1}{\beta(p,s)p^{t_{1}}}-\frac{1}{\beta(p,s)p^{t_{2}}}+\frac{1}{\beta(p,s)p^{t_{1}+t_{2}+w}}\right)\text{d}t_{1}\text{d}t_{2}\text{d}w.\notag
\end{align}
Further, we can write
\begin{align}
&\sum_{\Delta_{1}\Delta_{2}\Delta_{3}\mid \Pi_{l}}\frac{\mu^2(\Delta_{1})\mu(\Delta_{2})\mu(\Delta_{3})}{\Delta_{1}^{t_{1}+t_{2}+w}\Delta_{2}^{t_{2}}\Delta_{3}^{t_{1}}}\notag\\
=&\sum_{\Delta_{1}b\mid \Pi_{l}}\frac{\mu(b)\mu^2(\Delta_{1})}{b^{t_{1}}\Delta_{1}^{t_{1}+t_{2}+w}}\sum_{\Delta_{2}\mid b}\frac{\mu^2(\Delta_{2})}{\Delta_{2}^{t_{2}-t_{1}}}\notag\\
=&\sum_{\Delta_{1}b\mid \Pi_{l}}\frac{\mu(b)\mu^2(\Delta_{1})}{b^{t_{1}}\Delta_{1}^{t_{1}+t_{2}+w}}\prod_{p\mid b}\left(1+\frac{1}{p^{t_{2}-t_{1}}}\right)\notag\\
=&\sum_{a\mid \Pi_{l}}\frac{\mu(a)}{a^{t_{1}}}\prod_{p\mid a}\left(1+\frac{1}{p^{t_{2}-t_{1}}}\right)\sum_{\Delta_{1}\mid a}\frac{\mu(\Delta_{1})}{\Delta_{1}^{t_{2}+w}}\prod_{p\mid \Delta_{1}}\left(1+\frac{1}{p^{t_{2}-t_{1}}}\right)^{-1}\notag\\
=&\sum_{a\mid \Pi_{l}}\frac{\mu(a)}{a^{t_{1}}}\prod_{p\mid a}\left(1+\frac{1}{p^{t_{2}-t_{1}}}-\frac{1}{p^{t_{2}+w}}\right)\notag\\
=&\prod_{p\mid \Pi_{l}}\left(1-\frac{1}{p^{t_{1}}}-\frac{1}{p^{t_{2}}}+\frac{1}{p^{t_{1}+t_{2}+w}}\right),\notag
\end{align}
and finally, we get
\begin{align}
N(s)=&\frac{1}{(2\pi i)^3}\iiint_{(2)}\check{F}_{\sqrt{D}}(t_{1})\check{F}_{\sqrt{D}}(t_{2})\frac{\sqrt{D}^{t_{1}+t_{2}+w}}{t_{1}+t_{2}+w}\frac{1}{\Pi_{l}^{s+1}}\prod_{p\mid \Pi_{l}}\left(1-\frac{1}{p^{t_{1}}}-\frac{1}{p^{t_{2}}}+\frac{1}{p^{t_{1}+t_{2}+w}}\right)\notag\\
&\times\prod_{p\mid\Pi_{l}}\left(1+\frac{\kappa}{p^{s+1}}\right)^{-1}\prod_{p\nmid \Pi_{l}}\left(1-\frac{1}{\beta(p,s)p^{t_{1}}}-\frac{1}{\beta(p,s)p^{t_{2}}}+\frac{1}{\beta(p,s)p^{t_{1}+t_{2}+w}}\right)\text{d}t_{1}\text{d}t_{2}\text{d}w.\notag
\end{align}

\subsection{Upper bound for $R_{12}(X)$}\label{upper bound for P step 3}

From \eqref{P step 3} and $N(s)$ we obtained above, we find that
\begin{align}
&\sum_{(n,\Pi_{l})=1}g\left(\frac{n\Pi_{l}}{X}\right)\mu^{2}(n)\kappa^{\omega(n)}\left(\sum_{d\mid n\Pi_{l}}\lambda_{d}\right)^2\notag\\
:=&\frac{X}{(2\pi i)^4}\mathop{\iiiint}_{(1)}\tilde{g}(s+1)G(s+1)\zeta^{\kappa}(s+1)\check{F}_{\sqrt{D}}(t_{1})\check{F}_{\sqrt{D}}(t_{2})\notag\\
&\ \ \ \ \ \ \times H(s,t_{1},t_{2},w)I(s,t_{1},t_{2},w)\frac{X^s\sqrt{D}^{t_{1}+t_{2}+w}}{t_{1}t_{2}w}\text{d}s\text{d}t_{1}\text{d}t_{2}\text{d}w,\notag
\end{align}
where we shift all contours to $1+it$, $t\in\mathbb{R}$ without passing any poles of the integrand and
$$
H(s,t_{1},t_{2},w):=\prod_{p}\left(1-\frac{1}{\beta(p,s)p^{t_{1}}}-\frac{1}{\beta(p,s)p^{t_{2}}}+\frac{1}{\beta(p,s)p^{t_{1}+t_{2}+w}}\right)\zeta^{\kappa}(s+1),
$$
\begin{align}
I(s,t_{1},t_{2},w):=&\frac{1}{\Pi_{l}^{s+1}}\prod_{p\mid \Pi_{l}}\left(1-\frac{1}{\beta(p,s)p^{t_{1}}}-\frac{1}{\beta(p,s)p^{t_{2}}}+\frac{1}{\beta(p,s)p^{t_{1}+t_{2}+w}}\right)^{-1}\left(1+\frac{\kappa}{p^{s+1}}\right)^{-1}\notag\\
&\times \prod_{p\mid \Pi_{l}}\left(1-\frac{1}{p^{t_{1}}}-\frac{1}{p^{t_{2}}}+\frac{1}{p^{t_{1}+t_{2}+w}}\right)\notag
\end{align}
for $\Re(s+t_{1}+t_{2}+w)>0$, $\Re(s+t_{1})>0$, $\Re(s+t_{2})>0$ and $\Re s>-1$.

Letting
\begin{align}
K(s,t_{1},t_{2},w):=&H(s,t_{1},t_{2},w)\left(\frac{\zeta(s+t_{1}+t_{2}+w+1)}{\zeta(s+t_{1}+1)\zeta(s+t_{2}+1)}\right)^\kappa\left(\frac{s(s+t_{1}+t_{2}+w)}{(s+t_{1})(s+t_{2})}\right)^\kappa\notag\\
&\times\prod_{p}\left(1-\frac{1}{p^{s+t_{1}+1}}\right)^{-\kappa}\prod_{p}\left(1-\frac{1}{p^{s+t_{2}+1}}\right)^{-\kappa}\prod_{p}\left(1-\frac{1}{p^{s+t_{1}+t_{2}+w+1}}\right)^\kappa\notag
\end{align}
then we have
\begin{align}\label{P step 7}
&\sum_{n}g\left(\frac{n}{X}\right)\mu^{2}(n)\kappa^{\omega(n)}\left(\sum_{d\mid n\Pi_{l}}\lambda_{d}\right)^2\notag\\
=&\frac{X}{(2\pi i)^4}\mathop{\iiiint}_{(1)}\tilde{g}(s+1)G(s+1)\zeta^{\kappa}(s+1)\check{F}_{\sqrt{D}}(t_{1})\check{F}_{\sqrt{D}}(t_{2})I(s,t_{1},t_{2},w)\notag\\
&\ \ \ \ \ \ \times K(s,t_{1},t_{2},w)\left(\frac{(s+t_{1})(s+t_{2})}{s(s+t_{1}+t_{2}+w)}\right)^\kappa\frac{X^s\sqrt{D}^{t_{1}+t_{2}+w}}{t_{1}t_{2}w}\text{d}s\text{d}t_{1}\text{d}t_{2}\text{d}w.
\end{align}
We will evaluate the multiple-integral by shifting contours. It can be checked that the quadruple integral appearing in \eqref{P step 7} and that in Section 5 of \cite{Xi2018} (Page 1220) differ by only a finite term $I(s,t_{1},t_{2},w)$.  Therefore, after shifting the $w$-contour and $s$-contour in the same way and referring to page 1221, line 17 in \cite{Xi2018}, we only need to handle the integrand with $t_{1}$, $t_{2}$, where this integrand is
\begin{align}
\tilde{g}(1)&G(1)\sum_{j=1}^{\kappa}\binom{\kappa}{j}\sum_{i=0}^{j-1}\binom{j-1}{i}(\log X)^{j-i-1}(-1)^i\frac{\Gamma(i+j)}{\Gamma^2(j)}\notag\\
&\times\check{F}_{\sqrt{D}}(t_{1})\check{F}_{\sqrt{D}}(t_{2})K(0,t_{1},t_{2},0)I(0,t_{1},t_{2},0)(t_{1}t_{2})^{j-1}\frac{\sqrt{D}^{t_{1}+t_{2}}}{(t_{1}+t_{2})^{i+j}}.\notag
\end{align}
Noting that
$$
K(0,0,0,0)=\frac{1}{G(1)},
$$
and $I(0,t_{1},t_{2},0)$ satisfies the condition in Lemma \ref{lem: double residue}, we observe that the term
$$\prod_{p\mid \Pi_{l}}\left(1-\frac{1}{p^{t_{1}}}-\frac{1}{p^{t_{2}}}+\frac{1}{p^{t_{1}+t_{2}+w}}\right)$$
is the essential component in this context. Then from Lemma \ref{lem: double residue} with $m=2l$ we have
\begin{align}
&\sum_{(n,\Pi_{l})=1}g\left(\frac{n\Pi_{l}}{X}\right)\mu^{2}(n)\kappa^{\omega(n)}\left(\sum_{d\mid n\Pi_{l}}\lambda_{d}\right)^2\notag\\
=&(1+o(1))4^{2l+1}\tilde{g}(1)I^{(2l)}(0,0,0,0)\frac{X}{(\log X)^{2l+1}}\sum_{j=1}^{\kappa}\binom{\kappa}{j}\frac{1}{\Gamma^2(j)}\notag\\
&\times \int_{0}^{1}F^{(j+l)}(x)^{2}(1-x)^{j-1}\left(\sum_{i=0}^{j-1}\binom{j-1}{i}(x-1)^i4^{j-1-i}\right)\text{d}x\notag\\
=&(1+o(1))4^{2l+1}\tilde{g}(1)I^{(2l)}(0,0,0,0)\frac{X}{(\log X)^{2l+1}}\notag\\
&\times\sum_{j=1}^{\kappa}\binom{\kappa}{j}\frac{1}{\Gamma^2(j)}\int_{0}^{1}F^{(j+l)}(x)^{2}(1-x)^{j-1}(x+3)^{j-1}\text{d}x.\notag
\end{align}

Recalling \eqref{R12 upper 1}, we actually choose $l=1$, $\kappa=2$ in $P(X)$, and this leads to
$$
I^{(2)}(0,0,0,0)=\frac{(\log p)^2}{p}.
$$
Substituting the above into $R_{12}(X)$ and using
$$
\sum_{p\leq X}\frac{(\log p)^2}{p}=(1+o(1))(\log X)^2,
$$
we finally derive
$$
R_{12}(X)\leq (1+o(1))C_{2}(g,F)\frac{\varepsilon^{2}X}{\log X},
$$
where
$$
C_{2}(g,F)=4^4\tilde{g}(1)\sum_{j=1}^{2}\binom{2}{j}\frac{1}{\Gamma^2(j)}\int_{0}^{1}F^{(j+1)}(x)^{2}(1-x)^{j-1}(x+3)^{j-1}\text{d}x.
$$

Combining with the lower bound of $R_{11}(X)$ and choosing a sufficiently small $\varepsilon$, we always have
$$
R_{1}(X)\ge (1+o(1))0.76235\times\widetilde{g}(1)\frac{X}{\log X},
$$
which claims the Proposition \ref{lower bound for |KL|}.

\section{Proof of the Theorem}

In view of the restriction \eqref{value of alpha and beta}, we have
$$
\tau(n;\alpha,\beta)=\alpha_{0}^{3}N(n).
$$
Applying Proposition \ref{lower bound for |KL|} and Proposition \ref{upper bound for |KL|}, we can show that
$$
\rho R_{1}(X)>2R_{3}(X)
$$
holds by taking $\rho=16.453\times\alpha_{0}^3$. Now if
$$
\alpha_{0}^{3}N(n)<\rho,
$$
we further get $\omega(n)\leq 6$ by applying Lemma \ref{lem: number of prime factors} below. To establish a sufficiently tight lower bound for $N(7)$, we develop a combinatorial analysis of $N(n)$ in Lemma \ref{lem: number of prime factors}, where the case $n=7$ requires particular attention due to its non-trivial cardinality constraints. This proves our Theorem.

\section{Conclusion}\label{improve}

Firstly, we provide the reasons for the selection in Section \ref{selection}. The reason for this selection is that Weil's bound for Kloosterman sum is $2^{\omega(q)}$ when the modulo $q$ is a square-free number. Therefore, its size is only related to the number of prime factors.

On the one hand, the ``$\omega(d)=3$" here is difficult to be improved further. Interested readers can try limiting the condition to ``$\omega(d)=2$", which cannot improve the conclusion within the framework of this paper. On the other hand, when $\omega(n)=5$, we cannot always select three prime factors so that their product is less than $n^{1/2}$. $1/2$ here comes from the relative size of the variables $Q$ and $MN$ in Lemma \ref{lem: BDH type for |KL|}, for which we think this value is also difficult to improve. We will focus on discussing these two points in the end.

In \cite{Xi2022}, the structure of $\tau\left(n;\alpha,\beta\right)$ is the crux of the whole proof. Compared with \cite{Xi2022}, our method of selecting $\tau\left(n;\alpha,\beta\right)$ is different. This choice can make the upper bound of $R_{2}(X)$ slightly smaller than before, thereby expanding the allowable range of prime factors in the conclusion. We have provided more detailed explanations in Section \ref{proof of upper bound}.

We have proved that Theorem \ref{thm: main theorem} is valid when $\omega(q)\leq 6$. For possible improvement of deriving $\omega(q)\leq 5$ under the present framework, we are confronted with two central challenges that require our immediate attention.

{\bf Challenge 1: Minimizing $\theta_{1}$ for Prime Factor Triples}

Let $\theta_{1}>0$ be a real number satisfying
$$
p_{i}p_{j}p_{k}\leq q^{\theta_{1}},
$$
where $p_{i}$, $p_{j}$ and $p_{k}$ are prime factors of $q$. Obviously, $\theta_{1}\leq 1$. Our primary objective is to determine the minimal value of $\theta_{1}$ while ensuring the existence of at least one set of prime factor triples $\left\{p_{i},p_{j},p_{k}\right\}$ that satisfy this inequality. That is, \eqref{subset select} is non-empty. In particular, if Lemma \ref{lem: BDH type for |KL|} remains unchanged, the value of $\theta_{1}$ needs to be 1/2. But for $q$ with only 5 prime factors, such $p_{i}$, $p_{j}$ and $p_{k}$ can not always exist. This minimization process necessitates a meticulous examination of $q$'s prime factorization and the identification of optimal triples that minimize $\theta_{1}$.

However, if we relax the range of $\theta_{1}$ (greater than 1/2) such that the above $p_{i}$, $p_{j}$ and $p_{k}$ can exist, then we still need to solve the following problem, i.e. improving the result of Lemma \ref{lem: BDH type for |KL|}.

{\bf Challenge 2: Expanding the Range of $\theta_{2}$ Based on Lemma \ref{lem: BDH type for |KL|}}

Drawing upon the condition in Lemma \ref{lem: BDH type for |KL|}, let $\theta_{2}>0$ be a real number satisfying
$$
Q\leq (MN)^{\theta_{2}}(\log MN)^{-B}.
$$
From Lemma \ref{lem: BDH type for |KL|} we know that $\theta_{2}=1$. Our second objective is to determine the minimum value of $\theta_{2}$ so that the above inequality still holds.



\bigskip
\appendix
\section{}

\begin{lemma}\label{lem: number of prime factors}
Let $n>1$ be an integer with $\omega(n)\ge 6$. Then we have
$$
N(n)_{\omega(n)=6}=10,
$$
$$
N(n)_{\omega(n)=7}\ge 20,
$$
where $N(\cdot)$ is the cardinality of the set $\mathcal{B}(n)$ defined in \eqref{subset select}.
\end{lemma}

\begin{proof}\renewcommand{\qedsymbol}{}
For $\omega(n)=6$, if $d\mid n$ and $\omega(d)=3$, then necessarily $\omega(n/d)=3$ since $\omega(n)=6$. The number of such divisors $d$ is $\binom{6}{3}=20$. Each unordered pair $\{d,\,n/d\}$ is counted twice, hence $N(n)_{\omega(n)=6}=10$.

Now let $\omega(n)=7$ and write $n=p_{1}p_{2}\cdots p_{7}$ with $p_{1}<p_{2}<\dots<p_{7}$.
Because the primes are ordered, the following $14$ triples certainly satisfy $p_ip_jp_k<\sqrt{n}$:
\[
\begin{aligned}
p_{1}p_{2}p_{3},\; p_{1}p_{2}p_{4},\; p_{1}p_{2}p_{5},\; p_{1}p_{2}p_{6},\; p_{1}p_{3}p_{4},\; p_{1}p_{3}p_{5},\; p_{1}p_{3}p_{6},\;
\\ p_{1}p_{4}p_{5},\; p_{1}p_{4}p_{6},\; p_{2}p_{3}p_{4},\; p_{2}p_{3}p_{5},\; p_{2}p_{3}p_{6},\; p_{2}p_{4}p_{5},\; p_{2}p_{4}p_{6}.
\end{aligned}
\]
Denote this set by $\mathcal{G}_0$; then $|\mathcal{G}_0|=14$.

To improve the lower bound we examine the following six triples
\[
p_{1}p_{2}p_{7},\; p_{1}p_{3}p_{7},\; p_{1}p_{4}p_{7},\; p_{1}p_{5}p_{6},\; p_{1}p_{5}p_{7},\; p_{1}p_{6}p_{7}.
\]
If one of them exceeds $\sqrt{n}$, then the complementary product of four primes is at most $\sqrt{n}$, which forces several other triples to be at most $\sqrt{n}$.
We proceed by a case analysis, each time assuming that all previous triples from this list are at most $\sqrt{n}$.

\textbf{Case 1.} If $p_{1}p_{2}p_{7}>n^{1/2}$, then $p_{3}p_{4}p_{5}p_{6}\le n^{1/2}$, whence the six triples
\[
p_{1}p_{5}p_{6},\; p_{2}p_{5}p_{6},\; p_{3}p_{4}p_{5},\; p_{3}p_{5}p_{6},\; p_{3}p_{4}p_{6},\; p_{4}p_{5}p_{6}
\]
are all at most $n^{1/2}$. None of these belong to $\mathcal{G}_0$, so we obtain $N(n)_{\omega(n)=7}\ge 14+6=20$.
If $p_{1}p_{2}p_{7}\le n^{1/2}$, we keep this triple and proceed.

\textbf{Case 2.} Assume $p_{1}p_{2}p_{7}\le n^{1/2}$ and $p_{1}p_{3}p_{7}> n^{1/2}$. Then $p_{2}p_{4}p_{5}p_{6}\le n^{1/2}$, which forces the five triples
\[
p_{2}p_{5}p_{6},\; p_{3}p_{4}p_{6},\; p_{3}p_{5}p_{6},\; p_{3}p_{4}p_{5},\; p_{4}p_{5}p_{6}\le n^{1/2}.
\]
Together with the already counted $p_{1}p_{2}p_{7}$ we have at least $14+1+5=20$ admissible triples; hence $N(n)_{\omega(n)=7}\ge20$.
If $p_{1}p_{3}p_{7}\le n^{1/2}$, we retain it and continue.

\textbf{Case 3.} Under $p_{1}p_{2}p_{7},p_{1}p_{3}p_{7}\le n^{1/2}$, suppose $p_{1}p_{4}p_{7}>n^{1/2}$. Then $p_{2}p_{3}p_{5}p_{6}\le n^{1/2}$, giving the five triples
\[
p_{1}p_{5}p_{6},\; p_{2}p_{5}p_{6},\; p_{3}p_{4}p_{5},\; p_{3}p_{4}p_{6},\; p_{3}p_{5}p_{6}\le n^{1/2}.
\]
Together with the two stored triples we have $14+2+5=21$ admissible triples, so $N(n)_{\omega(n)=7}\ge21$.
If $p_{1}p_{4}p_{7}\le n^{1/2}$, we keep it.

\textbf{Case 4.} Assume $p_{1}p_{2}p_{7},p_{1}p_{3}p_{7},p_{1}p_{4}p_{7}\le n^{1/2}$ and $p_{1}p_{5}p_{6}>n^{1/2}$. Then $p_{2}p_{3}p_{4}p_{7}\le n^{1/2}$, which yields the five triples
\[
p_{2}p_{3}p_{7},\; p_{2}p_{4}p_{7},\; p_{3}p_{4}p_{5},\; p_{3}p_{4}p_{6},\; p_{3}p_{4}p_{7}\le n^{1/2}.
\]
Together with the three stored triples we obtain $14+3+5=22$ admissible triples.
If $p_{1}p_{5}p_{6}\le n^{1/2}$, we add it to the collection.

\textbf{Case 5.} Now suppose $p_{1}p_{2}p_{7},p_{1}p_{3}p_{7},p_{1}p_{4}p_{7},p_{1}p_{5}p_{6}\le n^{1/2}$ and $p_{1}p_{5}p_{7}>n^{1/2}$. Then $p_{2}p_{3}p_{4}p_{6}\le n^{1/2}$, forcing
\[
p_{3}p_{4}p_{5},\; p_{3}p_{4}p_{6}\le n^{1/2}.
\]
Together with the four stored triples we have $14+4+2=20$ admissible triples.
If $p_{1}p_{5}p_{7}\le n^{1/2}$, we retain it.

\textbf{Case 6.} Finally, under $p_{1}p_{2}p_{7},p_{1}p_{3}p_{7},p_{1}p_{4}p_{7},p_{1}p_{5}p_{6},p_{1}p_{5}p_{7}\le n^{1/2}$, assume $p_{1}p_{6}p_{7}>n^{1/2}$. Then $p_{2}p_{3}p_{4}p_{5}\le n^{1/2}$, whence $p_{3}p_{4}p_{5}\le n^{1/2}$. This gives $14+5+1=20$ admissible triples.

\textbf{Contingency.} If, after all the previous assumptions, we also have $p_{1}p_{6}p_{7}\le n^{1/2}$, then all six triples listed above are at most $\sqrt{n}$. None of these lie in $\mathcal{G}_0$, so we directly obtain $N(n)_{\omega(n)=7}\ge 14+6=20$.

In every possible case we have shown $N(n)_{\omega(n)=7}\ge 20$. This completes the proof for $\omega(n)=7$.\end{proof}

\bigskip
\section*{Acknowledgements}

This work is supported by the National Natural Science Foundation of China (Nos. 12271320, 11871317), and the Natural Science Basic Research Plan
for Distinguished Young Scholars in Shaanxi Province of China (No. 2021JC-29). The authors would like to thank the referees for their very helpful and detailed comments, which have significantly improved the presentation of this paper.

\baselineskip=0.9\normalbaselineskip {\small

}

\end{document}